\newtheorem{fakt}{Proposition}
\newtheorem{tw}{Theorem}
\newtheorem{hip}{Conjecture}
\newtheorem{corollary}{Corollary}
\newtheorem*{conjecture*}{Conjecture}
\theoremstyle{definition}
\theoremstyle{remark}
\newtheorem{example}{Example}
\newtheorem{remark}{Remark}
\theoremstyle{plain}
\newcounter{thm}
\newtheorem{main_theorem}[thm]{Theorem}
\DeclareMathOperator{\Dom}{Dom}
\DeclareMathOperator{\tr}{tr}
\DeclareMathOperator{\sym}{sym}
\DeclareMathOperator{\discr}{discr}
\newcommand{\norm}[1]{\lVert {#1} \rVert}
\newcommand{\sprod}[2]{\langle {#1}, {#2} \rangle}
\newcommand{\abs}[1]{\lvert {#1} \rvert}
\newcommand{\Id}{\operatorname{Id}}
\newcommand{\NN}{\mathbb{N}}
\newcommand{\sS}{\mathbb{S}}
\newcommand{\RR}{\mathbb{R}}
\newcommand{\ZZ}{\mathbb{Z}}
\newcommand{\calV}{\mathcal{V}}
\newcommand{\calB}{\mathcal{B}}
\newcommand{\calQ}{\mathcal{Q}}
\newcommand{\calC}{\mathcal{C}}
\newcommand{\calF}{\mathcal{F}}
\newcommand{\ue}{\mathrm{e}}
\newcommand{\pl}[1]{\foreignlanguage{polish}{#1}}
\title[Periodic perturbations of unbounded Jacobi matrices I]
{Periodic perturbations of unbounded Jacobi matrices I: Asymptotics of generalized eigenvectors}
\author{Grzegorz Świderski}
\email{gswider@math.uni.wroc.pl}
\address{
	Grzegorz Świderski\\
	Instytut Matematyczny\\
	Uniwersytet Wrocławski\\
	Pl. Grunwaldzki 2/4\\
	50-384 Wrocław\\
	Poland}
\author{Bartosz Trojan}
\email{bartosz.trojan@pwr.edu.pl}
\address{
    \pl{
    Bartosz Trojan\\
    Wydzia\l{} Matematyki,
    Politechnika Wroc\l{}awska\\
    Wyb. Wyspia\'{n}skiego 27\\
    50-370 Wroc\l{}aw\\
    Poland}
}
\keywords{Jacobi matrix, asymptotics of generalized eigenvectors, total variation}
\subjclass[2010]{Primary: 47B25, 47B36, 42C05. Secondary: 60J80.}
\begin{document}
\selectlanguage{english}

\begin{abstract}
   We study asymptotics of generalized eigenvectors associated with Jacobi matrices. 
   Under weak conditions on the coefficients we identify when the matrices are self-adjoint
   and we show that they satisfy strong non-subordinacy condition.
\end{abstract}

\maketitle

\section{Introduction}
Jacobi matrix is a matrix defined by two sequences $a = (a_n \colon n \in \mathbb{N})$ and
$b = (b_n \colon n \in \mathbb{N})$ such that $a_n > 0$ and $b_n \in \mathbb{R}$ by the formula
\begin{equation*}
	A =
	\left( 
   	\begin{array}{cccccc}
		b_0 & a_0 & 0   & 0      &\ldots \\
		a_0 & b_1 & a_1 & 0      & \ldots \\
		0   & a_1 & b_2 & a_2    & \ldots \\
		0   & 0   & a_2 & b_3  &  \\
		\vdots & \vdots & \vdots & &  \ddots
	\end{array} 
	\right).
\end{equation*}
The domain of the operator $A$ is $\Dom(A) = \{ x \in \ell^2 \colon A x \in \ell^2\}$, where
\[
	\ell^2 = \Big\{ x \in \mathbb{C}^{\mathbb{N}} \colon \sum_{n=0}^\infty |x_n|^2 < \infty \Big\}.
\]
For a number $\lambda \in \mathbb{R}$, a non-zero sequence $u = (u_n \colon n \in \mathbb{N})$ is called a
\emph{generalized eigenvector} if it satisfies the recurrence relation
\[
   a_{n-1} u_{n-1} + b_n u_n + a_n u_{n+1} = \lambda u_n, \quad (n \geq 1).
\]

The asymptotic behaviour of generalized eigenvectors implies properties of the operator $A$.
In particular, the operator $A$ is self-adjoint if and only if there exists a generalized eigenvector $u$ such that
$u \notin \ell^2$. On the other hand, subordinacy theory developed in \cite{CH1} and \cite{KP1} reduces questions
about absolutely continuous spectrum of $A$ to the asymptotic analysis of generalized eigenvectors. Specifically,
let $I \subset \mathbb{R}$ be an interval and let the operator $A$ be self-adjoint. If for every pair of generalized 
eigenvectors $u$ and $v$ associated with $\lambda \in I$ holds
\[
   \limsup_{n \rightarrow \infty} \frac{\sum_{i=0}^n u_i^2}{\sum_{i=0}^n v_i^2} < \infty,
\]
then the operator $A$ is absolutely continuous on $I$ and $I$ is in the spectrum of $A$.

The aim of this paper is to analyse the asymptotic behaviour of generalized eigenvectors. This type of results
allows us to better understand properties of the operator $A$. In the sequel \cite{GS2}, 
under stronger assumptions and by means of different techniques, we are able to obtain stronger and more constructive 
spectral information about the operator $A$.

For bounded sequences $a$ and $b$ asymptotics of generalized eigenvectors is well understood and precisely 
described, see for example the recent book \cite{BS2}.

In the unbounded case, the problem is more complex. One of the results in this direction was obtained by Clark in
\cite{SLC}, where the author proved that if the sequences $a$ and $b$ satisfy%
\footnote{The total $N$-variation of a sequence $(x_n : n \in \NN)$ is defined by
\[
	\calV_N(x_n : n\in \NN) = \sum_{n = 0}^\infty \abs{x_{n+N} - x_n}.
\]}
\[
	\calV_1\bigg(\frac{1}{a_n} : n\in \NN\bigg) +
	\calV_1\bigg(\frac{a_{n+1}}{a_n} : n \in \NN\bigg) +
	\calV_1(b_n : n\in \NN) < \infty
\]
and 
\[
	\lim_{n \to \infty} \frac{1}{a_n} = 0,
\]
then for every compact interval $I \subset \mathbb{R}$ there are constants $c_1, c_2>0$ such that for every generalized
eigenvector $u$ associated with $\lambda \in I$ one has
\begin{equation}
	\label{eq:4}
	\frac{c_1}{a_n} (u_0^2 + u_1^2) \leq u_{n-1}^2 + u_n^2 \leq \frac{c_2}{a_n} (u_0^2 + u_1^2).
\end{equation}
Notice that, even though $u_n$ may be zero, it cannot happen that both $u_{n-1}$ and $u_n$ are.

In particular, the operator $A$ is self-adjoint if and only if the Carleman's condition is satisfied, i.e.
\begin{equation}
	\label{eq:14}
	\sum_{n = 0}^\infty \frac{1}{a_n} = \infty.
\end{equation}
By means of subordinacy theory, the spectrum of the operator $A$ equals $\RR$ and it is absolutely continuous. 

However, there are very natural cases not covered by \cite{SLC}. For example, sequences of the form
$a_{2k} = a_{2k+1} = \tilde{a}_k$. In this context the sequence $\tilde{a}_n = (n+1)^\kappa$, for $\kappa \in (0, 1]$,
was investigated in \cite{MM1}. While for $\kappa \in (0, 1)$ the spectrum $\sigma(A)$ of the operator $A$ equals $\RR$ and it is
absolutely continuous, for $\kappa = 1$ a new phenomenon occurs. Namely, the operator $A$ is absolutely continuous on
$\RR \setminus [-1/2, 1/2]$ and $\sigma(A) = \RR \setminus (-1/2, 1/2)$, consequently $(-1/2, 1/2)$ is the spectral gap, 
i.e. $\sigma(A) \cap (-1/2, 1/2) = \emptyset$.

The results of Moszyński \cite{MM1} initiated studies to better understand when a spectral gap occurs. 
In \cite{JN1}, the authors considered periodic modulations, i.e.
$a_n = \alpha_n \tilde{a}_n$, $b_n = \beta_n \tilde{b}_n$, where $(\alpha_n : n \in \NN)$ and $(\beta_n : n \in \NN)$
are periodic sequences.
Additive perturbations were investigated under the condition $b_n = 0$. In \cite{JM1}, the periodic perturbations
of $n^\kappa$ for $\kappa \in (0, 1]$ were considered, namely, $a_n = n^\kappa + d_n$, where $(d_n : n \in \NN)$
is a periodic sequence. Non-periodic perturbations were studied in \cite{JD1}. Finally, the case of periodic perturbations
of monotone increasing sequence was investigated in \cite{JD2}. In \cite{JM1, JN1, MM1} the asymptotics of generalized
eigenvectors is also studied.

In this article, we give rather general conditions on the sequences $a$ and $b$, which guarantee that the generalized
eigenvectors have the asymptotics of the form \eqref{eq:4}. In particular, with a help of subordinacy theory, we get results
about the spectrum of the operator $A$. The conditions are flexible enough to cover the cases considered in
\cite{SLC, JD2, JD1, DJMP, JM1, JN1, JNS, MM1}, see Section~\ref{sec:4.3} for details. Moreover, our results implies the
strong non-subordinacy of $A$ and estimates on the density.

The following theorem is a consequence Theorem~\ref{tw:1} and Proposition~\ref{fakt:2} proved in Section~\ref{sec:3.1}.
\begin{main_theorem}
	\label{thm:1}
	Let $N$ be a positive integer. Suppose that
	\[
		\calV_N\bigg(\frac{1}{a_n} : n \in \NN\bigg) +
		\calV_N\bigg(\frac{b_n}{a_n} : n \in \NN\bigg) +
		\calV_1\bigg(\frac{a_{n+N}}{a_n} : n \in \NN\bigg) < \infty.
	\]
	Let
	\begin{enumerate}[(a)]
		\item $\begin{aligned}[b]
			\lim_{n \to \infty} \frac{1}{a_n} = 0;
		\end{aligned}$
		\item $\begin{aligned}[b]
			\lim_{k \to \infty} \frac{b_{kN + j}}{a_{kN+j}} = q_j
		\end{aligned}$
		for $j \in \{0, \ldots, N-1\}$;
		\item $\begin{aligned}[b]
			\lim_{k \to \infty} \frac{a_{kN+j-1}}{a_{kN+j}} = r_j > 0
		\end{aligned}$
		for $j \in \{0, \ldots, N-1\}$,
	\end{enumerate}
	and
	\[
		\calF = \prod_{j = 0}^{N-1} 
		\begin{pmatrix}
			0 & 1 \\
			-r_j & -q_j
		\end{pmatrix}
		\qquad\text{and}\qquad
		E = 
		\begin{pmatrix}
			0 & -1 \\
			1 & 0
		\end{pmatrix}.
	\]
	If \footnote{$X^t$ denotes the transpose of the matrix $X$}
	$\det(E \calF + \calF^t E^t) > 0$, then for every compact interval $I \subset \mathbb{R}$ there are constants
	$c_1, c_2 > 0$ such that for every generalized eigenvector $u$ associated with $\lambda \in I$ one has
   \[
      \frac{c_1}{a_n} (u_0^2 + u_1^2) \leq u_{n-1}^2 + u_n^2 \leq \frac{c_2}{a_n} (u_0^2 + u_1^2).
   \]
\end{main_theorem}
In particular, by taking $N = 1$ in Theorem~\ref{thm:1}, we can obtain a generalization of Clark's result 
(for related results see \cite{JM2, JN2}). Although, Theorem~\ref{thm:1} covers a large class of sequences, it cannot
be applied if $r_0 = r_1 = \cdots = r_{N-1} = 1$ and $q_0 = q_1 = \cdots = q_{N-1} = q$, where
\begin{equation} \label{eq:definicjaQ}
	q \in \bigg\{2 \cos \frac{\pi}{N}, \ldots, 2 \cos\frac{(N-1)\pi}{N} \bigg\}.
\end{equation}
This case, called \emph{critical}, is covered by the next theorem being a consequence of Theorem~\ref{tw:2} and
Proposition~\ref{fakt:3} proven in Section~\ref{sec:3.2}.
\begin{main_theorem}
	\label{thm:2}
	Let $N$ be a positive integer and let $q$ be defined in \eqref{eq:definicjaQ}.
	Suppose that
	\[
		\calV_N\left( a_n - a_{n-1} : n \in \NN \right) + 
		\calV_N\left( \frac{1}{a_n} : n \in \NN\right) +
		\calV_N\big(b_n - q a_n : n \in \NN\big) < \infty.
	\]
	If
	\begin{enumerate}[(a)]
		\item $\begin{aligned}[b]
			\lim_{n \to \infty} \frac{1}{a_n} = 0;
		\end{aligned}$
		\item $\begin{aligned}[b]
			\lim_{n \to \infty} (b_n - q a_n) = 0;
		\end{aligned}$
		\item $\begin{aligned}[b]
			\lim_{k \to \infty} (a_{k N + j} - a_{k N  + j - 1}) = s_j
		\end{aligned}$
		for $j \in \{0, \ldots, N-1\}$,
	\end{enumerate}
   then for every compact interval $I \subset \RR \setminus [\lambda_-, \lambda_+]$ there are constants $c_1>0, c_2>0$ such that for every generalized eigenvector $u$ associated with $\lambda \in I$ one has
   \[
      \frac{c_1}{a_n} (u_0^2 + u_1^2) \leq u_{n-1}^2 + u_n^2 \leq \frac{c_2}{a_n} (u_0^2 + u_1^2).
   \]
	The numbers $\lambda_- \leq \lambda_+$ are the roots of the equation
	\[
 		0 = \lambda^2 \frac{N^2}{4 - q^2} - \lambda \frac{N q S}{4 - q^2}  \\
		+ S \sum_{j=1}^{N-1} s_j v^2_{j-1} - \sum_{i,j=1}^{N-1} s_i s_j v_{i-1} v_{j-1} v_{i-j}
		- \frac{1}{4} S^2,
	\]
	where $S = \lim_{n \rightarrow \infty} (a_{n+N} - a_n)$, $v_n = w_n(q)$ and $w_n$ is the sequence of Chebyshev
	polynomials of the second kind defined in \eqref{eq:1}.
\end{main_theorem}
Theorem \ref{thm:2} provides a new class of sequences such that the operator $A$ potentially has a spectral gap.

In proving Theorem \ref{thm:1} and Theorem \ref{thm:2} we further develop the method used in \cite[Theorem 4.1]{GS1}.
This approach allows us to better understand the reason why there might be a spectral gap. Moreover, our results give
elementary proofs for previously considered sequences.

Let us briefly outline the structure of the article. In Section \ref{sec:2}, we recall basic definitions and introduce
a notion of strong non-subordinacy. Section \ref{sec:3} contains proofs of main theorems. Finally, in Section \ref{sec:4},
we provide applications of main results.

\subsection*{Notation}
We use the convention that $c$ stands for a generic positive constant whose value can change
from line to line. The set of non-negative integers is denoted by $\NN$. 

\section{Preliminaries}
\label{sec:2}
      
For a number $\lambda \in \mathbb{R}$, a non-zero sequence $u = (u_n \colon n \in \mathbb{N})$ is called a
\emph{generalized eigenvector} if 
\begin{equation}
	\label{eq:3}
   	a_{n-1} u_{n-1} + b_n u_n + a_n u_{n+1} = \lambda u_n, \quad (n \geq 1).
\end{equation}
For each $\alpha \in \RR^2 \setminus \{ 0 \}$ there is unique generalized eigenvector $u$ such that
$(u_0, u_1) = \alpha$. If the recurrence relation \eqref{eq:3} holds also for $n = 0$, with the convention that
$u_{-1} = 0$, then $u$ is a \emph{formal eigenvector} of the matrix~$A$ associated with $\lambda$. 

If $A$ is a self-adjoint operator, we define a Borel measure $\mu$ on $\RR$ by setting
\[
    \mu(B) = \big\langle E_A(B) \delta_0, \delta_0\big\rangle,
\]
where $E_A$ is the spectral resolution of $A$ and $\delta_0$ is the sequence with $1$ on $0$th position and $0$ elsewhere.

Following Clark and Hinton \cite{CH1}, we say that a self-adjoint matrix~$A$ satisfies a 
\emph{strong non-subordinacy} condition on a compact interval $I \subset \RR$ if there exists a constant $\beta > 0$
such that for every generalized eigenvectors $u$ and $v$ associated with $\lambda \in I$ and normalized
so that $u_0^2 + u_1^2 = v_0^2 + v_1^2 = 1$ we have
\[
	\limsup_{n \rightarrow \infty} \frac{\sum_{i=0}^n |u_i|^2}{\sum_{i=0}^n |v_i|^2} \leq \beta.
\]
It was proved in \cite{CH1} (see also \cite{SLC}) that a Jacobi matrix $A$ satisfying the strong non-subordinacy
condition on $I$ gives rise to the absolutely continuous measure $\mu$. Moreover, its density is bounded above and
below by $10 \beta/\pi$ and $1/(10\beta \pi)$, respectively.

Given a positive integer $N$, we define the total $N$-variation $\calV_N$ of a sequence of vectors 
$x = \big(x_n : n \in \NN\big)$ from a vector space $V$ by
\[
	\calV_N(x) = \sum_{n = 0}^\infty \norm{x_{n + N} - x_n}.
\]
Observe that if $(x_n : n \in \NN)$ has finite total $N$-variation, then for each $j \in \{0, \ldots, N-1\}$
a subsequence $(x_{k N + j} : k \in \NN)$ is a Cauchy sequence. Total $N$-variation controls supremum of a sequence.
Indeed, we have
\begin{equation}
	\label{eq:28}
	\sup_{n \in \NN} {\norm{x_n}} \leq \calV_N(x_n : n \in \NN) + \max\big\{\norm{x_0}, \ldots, \norm{x_{N-1}}\big\}.
\end{equation}
Moreover, if $V$ is also an algebra, then for any two sequences $(x_n : n \in \NN)$ and $(y_n : n \in \NN)$
we have
\begin{equation}
	\label{eq:29}
	\calV_N(x_n y_n : n \in \NN) \leq \sup_{n \in \NN}{\norm{x_n}}\ \calV_N(y_n : n \in \NN) +
	\sup_{n \in \NN}{\norm{y_n}}\ \calV_N(x_n : n \in \NN).
\end{equation}
For a square matrix $C$, its \emph{symmetrization} is defined by 
\[
	\sym(C) = \frac{C + C^t}{2},
\]
where $C^t$ is the transpose of $C$. With a binary quadratic form, that is a quadratic form on $\RR^2$, represented by
a real symmetric matrix $C$, we associate its \emph{discriminant} given by the formula $\discr(C) = (\tr C)^2 - 4 \det C$.
For a sequence of square matrices $(C_n : n \in \NN)$ and $n_0, n_1 \in \NN$ we set
\[
	\prod_{k=n_0}^{n_1} C_k = 
    \begin{cases} 
    C_{n_1} C_{n_1 - 1} \cdots C_{n_0} & n_1 \geq n_0, \\
    \Id & \text{otherwise.}
    \end{cases}
\]
Finally, let us recall the definition of Chebyshev polynomials of the second kind, i.e. polynomials
$U_n$ satisfying the following relations
\begin{equation}
	\label{eq:20}
	\begin{gathered}
	U_{-1} (x) = 0, \qquad U_0(x) = 1,  \\
	U_{n+1} (x) = 2 x U_n(x) - U_{n-1}, \qquad (n \geq 0).
	\end{gathered}
\end{equation}
There is the explicit formula for $U_n$ valid on the interval $(-1, 1)$. Namely,
\begin{equation}
	\label{eq:2}
	U_n(x) = \frac{\sin\big((n+1) \arccos x \big)}{\sin \arccos x}
\end{equation}
for $x \in (-1, 1)$. For our application, it is more convenient to use polynomials $w_n$ given by
\begin{equation}
	\label{eq:1}
	w_n(x) = U_n(x/2).
\end{equation}

\section{The strong non-subordinacy}
\label{sec:3}
\subsection{Turán-type determinants}
In this section, we study under which conditions the matrix $A$ is self-adjoint and 
satisfies strong non-subordinacy condition.

Let $\Lambda$ be an open subset of $\RR$. Suppose that for each $\lambda \in \Lambda$ there is a sequence
$Q^\lambda = (Q_n^\lambda : n \in \NN)$ of quadratic forms on $\RR^2$. We say that
$\{ Q^\lambda : \lambda \in \Lambda \}$ is \emph{uniformly non-degenerated} on $I$ if there are $c \geq 1$
and $M \geq 1$	such that for all $v \in \RR^2$, $\lambda \in I$ and $n \geq M$
\[
	c^{-1} \norm{v}^2 \leq \abs{Q_n^\lambda(v)} \leq c \norm{v}^2.
\]
We say $\{Q^\lambda : \lambda \in \Lambda\}$ is \emph{almost uniformly non-degenerated} on $\Lambda$ if
it is uniformly non-degenerated on each compact subinterval of $\Lambda$.

Fix a positive integer $N$ and a Jacobi matrix $A$. Let us define a sequence of functions $(S_n : n\in \NN)$,
$S_n : \sS^1 \times \Lambda \rightarrow \RR$ by
\begin{equation}
	\label{eq:6}
	S_n(\alpha, \lambda) = a_{n + N - 1} Q^\lambda_n
	\begin{pmatrix}
			u_{n-1} \\
            u_n
	\end{pmatrix},
\end{equation}
where $u$ is the generalized eigenvector corresponding to $\lambda$ such that $(u_0, u_1) = \alpha \in \sS^1$,
and $\sS^1$ is the unit sphere in $\RR^2$ centred at the origin.
	
In some cases, the sequence $(S_n : n \in \NN)$ is related to \emph{shifted Turán determinants} employed in
\cite{GvA1} (see also \cite{DombrowskiNevai1986}) to study asymptotically periodic Jacobi matrices.
\begin{tw}
	\label{tw:3}
	Let $A$ be a Jacobi matrix and $\{Q^\lambda : \lambda \in I\}$ be a family of sequences of
	binary quadratic forms uniformly non-degenerated on a compact interval $I$. Suppose that there is $c \geq 1$ such
	that for all $\alpha \in \sS^1$ and $\lambda \in I$
	\begin{equation}
		\label{eq:7}
		c^{-1} \leq \abs{S_n(\alpha, \lambda)} \leq c.
	\end{equation}
	Then there is $c \geq 1$ such that for all $\lambda \in I$, and every generalized eigenvector $u$ corresponding to 
	$\lambda$ and $n \geq 1$
	\[
		c^{-1} a_{n+N-1}^{-1} (u_0^2 + u_1^2) \leq u_{n-1}^2+u_n^2 \leq c a_{n+N-1}^{-1} (u_0^2 + u_1^2).
	\]
	In particular, the matrix $A$ is self-adjoint if and only if
	\begin{equation}
		\label{eq:5}
		\sum_{n = 0}^\infty \frac{1}{a_n} = \infty.
	\end{equation}
	Moreover, \eqref{eq:5} implies that the matrix $A$ satisfies the strong non-subordinacy
	condition on~$I$.
\end{tw}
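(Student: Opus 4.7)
The plan is to extract all three conclusions from a single comparison between $u_{n-1}^2+u_n^2$ and $a_{n+N-1}^{-1}(u_0^2+u_1^2)$, which I would derive by promoting the hypothesis $c^{-1}\le \abs{S_n(\alpha,\lambda)}\le c$ from the unit sphere to arbitrary initial conditions by homogeneity, and then dividing by $Q^\lambda_n$.

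First I would observe that the map $(u_0,u_1)\mapsto (u_{n-1},u_n)$ is linear, being a product of transfer matrices of the recurrence \eqref{eq:3}, so $\alpha\mapsto S_n(\alpha,\lambda)$ is a quadratic form on $\RR^2$. Consequently, by $2$-homogeneity, the bound on $\sS^1$ extends to
\[
	c^{-1}(u_0^2+u_1^2)\ \le\ a_{n+N-1}\,\bigl|Q^\lambda_n(u_{n-1},u_n)\bigr|\ \le\ c(u_0^2+u_1^2)
\]
for every generalized eigenvector $u$ and every $\lambda\in I$. Combining this with uniform non-degeneracy of $\{Q^\lambda : \lambda\in I\}$, which supplies the comparison $\abs{Q^\lambda_n(u_{n-1},u_n)}\asymp u_{n-1}^2+u_n^2$ for all $n\geq M$, immediately delivers the two-sided bound on $u_{n-1}^2+u_n^2$ for large $n$. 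The finitely many indices $n<M$ I would dispatch by a compactness argument: for each such $n$ the map $\alpha\mapsto u_{n-1}^2+u_n^2$ is a positive-definite quadratic form in $\alpha$ (positive because $u_{n-1}=u_n=0$ would force $u\equiv 0$ via \eqref{eq:3}) whose coefficients depend continuously on $\lambda\in I$, so on the compact set $\sS^1\times I$ it is comparable to $u_0^2+u_1^2$; adjusting $c$ absorbs these terms.

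For the self-adjointness assertion I would quote the standard criterion, already recalled in the introduction, that $A$ is self-adjoint exactly when some generalized eigenvector fails to lie in $\ell^2$. Summing the just-established equivalence $u_{n-1}^2+u_n^2\asymp a_{n+N-1}^{-1}(u_0^2+u_1^2)$ shows that $\sum_n u_n^2=\infty$ if and only if $\sum_n 1/a_n=\infty$, giving the dichotomy \eqref{eq:5}.

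Finally, assuming \eqref{eq:5}, strong non-subordinacy is a direct consequence. Given normalized generalized eigenvectors $u,v$ with $u_0^2+u_1^2=v_0^2+v_1^2=1$ and $\lambda\in I$, the upper bound for $u$ combined with the lower bound for $v$ yields $u_i^2\le u_{i-1}^2+u_i^2\le c^2(v_{i-1}^2+v_i^2)$ uniformly in $\lambda\in I$ and $i\geq M$; summing and using that $\sum_{i=0}^n v_i^2\to\infty$ (which absorbs the bounded contribution from the indices $i<M$) gives $\limsup_n \sum_{i\leq n}u_i^2/\sum_{i\leq n}v_i^2\le 2c^2$, so $\beta=2c^2$ works.

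The main obstacle I anticipate is purely bookkeeping: converting the hypothesis on $\sS^1$ into a bound valid for arbitrary initial data uniformly in $\lambda\in I$, and gluing the regime $n\geq M$ with the finitely many initial indices without losing uniformity. Once that is in place, the self-adjointness dichotomy and the strong non-subordinacy estimate are essentially formal consequences.
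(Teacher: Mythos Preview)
Your proposal is correct and follows essentially the same route as the paper: combine the uniform non-degeneracy bound $|Q^\lambda_n(v)|\asymp\norm{v}^2$ for $n\geq M$ with the hypothesis $|S_n|\asymp 1$ to get the two-sided estimate on $u_{n-1}^2+u_n^2$, then read off the self-adjointness dichotomy and the strong non-subordinacy bound by summing. You are actually more careful than the paper on two points it leaves implicit: the homogeneity extension from $\sS^1$ to arbitrary initial data, and the compactness argument covering the finitely many indices $n<M$.
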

\begin{proof}
	Let $\lambda \in I$ and $u$ be a generalized eigenvector corresponding to $\lambda$ such that
	$(u_0, u_1) = \alpha \in \sS^1$. Since $\{Q^\lambda : \lambda \in I\}$ is uniformly non-degenerated,
	there are $c \geq 1$ and $M \geq 1$ such that for all $n \geq M$
	\[
		c^{-1} a_{n+N-1} (u_{n-1}^2 + u_n^2) 
		\leq 
		\abs{S_n(\alpha, \lambda)} 
		\leq c a_{n+N-1} (u_{n-1}^2 + u_n^2),
	\]
	which together with \eqref{eq:7} implies that there is $c \geq 1$ such that for all $n \geq M$
    \begin{equation}
		\label{eq:8}
        c^{-1} a_{n+N-1}^{-1} \leq u_{n-1}^2 + u_n^2 \leq c a_{n+N-1}^{-1}.
   	\end{equation}
	In particular, $u \notin \ell^2$ if and only if \eqref{eq:5} holds. The last statement combined with 
	\cite[Theorem 2 and Theorem 3]{BS1} is equivalent to self-adjointness of the matrix~$A$.
        
	Finally, by \eqref{eq:8}, we get
	\begin{equation}
		\label{eq:9}
		2^{-1} c^{-1} \sum_{j = M}^n a_{j+N-1}^{-1} 
		\leq
		\sum_{j = M-1} ^n u_j^2 
		\leq 
		c \sum_{j = M}^n a_{j+N-1}^{-1},
	\end{equation}
	where the constant $c \geq 1$ is independent of $\lambda \in I$ and a generalized eigenvector $u$.
	Hence, for any two generalized eigenvectors $u$ and $v$ corresponding to $\lambda \in I$ and
	normalized so that $(u_0, u_1), (v_0, v_1) \in \sS^1$, by \eqref{eq:9} and \eqref{eq:5},
	we have
	\[
		\limsup_{n \to \infty}
		\frac{\sum_{j = 0}^n \abs{u_j}^2}{\sum_{j = 0}^n \abs{v_j}^2}
		\leq
		\limsup_{n \to \infty}
		\frac{\sum_{j = 0}^{M-2} \abs{u_j}^2 + c \sum_{j = M-1}^n a_{n + N - 1}^{-1}}
		{\sum_{j = 0}^{M-2} \abs{v_j}^2 + 2^{-1} c^{-1} \sum_{j = M-1}^n a_{n+N-1}^{-1}}
		\leq
		2 c^2,
	\]
	which finishes the proof.
\end{proof}

\begin{remark}
	Theorem \ref{tw:3} is also interesting in the case when the condition \eqref{eq:5} is not satisfied. 
	Indeed, it gives a criterion to non-self-adjointness of the matrix $A$ and implies asymptotics of the
	generalized eigenvectors. Such information was needed in the proof of \cite[Theorem 4.8 and Theorem 5.9]{BCRSz}.
\end{remark}
     
\begin{corollary}
	\label{cor:1}
	Let $A$ be a Jacobi matrix and $\{Q^\lambda : \lambda \in \Lambda\}$ be a family of sequences of binary
	quadratic forms such that
	\begin{equation}
		\label{eq:48}
		\lim_{n \to \infty} Q_n^\lambda = f(\lambda) \Id
	\end{equation}
	almost uniformly with respect to $\lambda \in \Lambda$, where $f: \Lambda \rightarrow \RR$ is a function without
	zeros. Suppose that for each $j \in \{0, \ldots, N-1\}$
	\begin{equation}
		\label{eq:49}
		\lim_{k \to \infty} \abs{S_{kN+j}(\alpha, \lambda)} = g_j(\alpha, \lambda)
	\end{equation}
	almost uniformly with respect to $\alpha \in \sS^1$ and $\lambda \in \Lambda$.
	Then for any generalized eigenvector $u$ corresponding to $\lambda \in \Lambda$ such that
	$(u_0, u_1) = \alpha \in \sS^1$
	\begin{equation}
		\label{eq:50}
		\lim_{k \to \infty} a_{(k+1)N + j - 1} \big(u_{kN+j-1}^2 + u_{kN + j}^2\big) 
		=
		\frac{g_j(\alpha, \lambda)}{\abs{f(\lambda)}}
	\end{equation}
	almost uniformly with respect to $\alpha \in \sS^1$ and $\lambda \in \Lambda$.
\end{corollary}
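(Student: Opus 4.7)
My plan is to read off \eqref{eq:50} directly from the definition \eqref{eq:6} of $S_n$ combined with the asymptotic $Q_n^\lambda \to f(\lambda)\Id$. Write $v_n = (u_{n-1}, u_n)^t$ so that
\[
    S_n(\alpha,\lambda) = a_{n+N-1}\, Q_n^\lambda(v_n).
\]
The assumption $f \neq 0$ on $\Lambda$, combined with \eqref{eq:48}, makes $\{Q^\lambda : \lambda\in\Lambda\}$ almost uniformly non-degenerated, so in particular $v_n \neq 0$ for large $n$. Factoring $v_n = \norm{v_n}\, w_n$ with $w_n = v_n/\norm{v_n} \in \sS^1$ and using homogeneity of quadratic forms,
\[
    Q_n^\lambda(v_n) = \norm{v_n}^2 Q_n^\lambda(w_n) = (u_{n-1}^2 + u_n^2)\, Q_n^\lambda(w_n).
\]

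Next, I would translate the matrix convergence $Q_n^\lambda \to f(\lambda)\Id$ into uniform convergence of values on $\sS^1$. Since $\sS^1$ is compact and the evaluation $(C, w) \mapsto w^t C w$ is continuous, \eqref{eq:48} implies
\[
    \sup_{w \in \sS^1} \abs{Q_n^\lambda(w) - f(\lambda)} \longrightarrow 0
\]
almost uniformly in $\lambda \in \Lambda$. Applying this at $w = w_n$ gives a sequence $\varepsilon_n(\alpha,\lambda) \to 0$, almost uniformly in $(\alpha,\lambda) \in \sS^1 \times \Lambda$, such that
\[
    Q_n^\lambda(v_n) = \bigl(f(\lambda) + \varepsilon_n(\alpha,\lambda)\bigr)(u_{n-1}^2 + u_n^2).
\]

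Substituting into \eqref{eq:6} and taking absolute values yields
\[
    \abs{S_n(\alpha,\lambda)} = a_{n+N-1}(u_{n-1}^2 + u_n^2)\, \bigl|f(\lambda) + \varepsilon_n(\alpha,\lambda)\bigr|.
\]
Fix a compact $I \subset \Lambda$. Since $f$ vanishes nowhere and is continuous, $\inf_{\lambda \in I}|f(\lambda)| > 0$, hence for $n$ large enough $|f(\lambda)+\varepsilon_n(\alpha,\lambda)| \geq \tfrac{1}{2}|f(\lambda)|$ uniformly on $\sS^1 \times I$. We may therefore divide and obtain
\[
    a_{n+N-1}(u_{n-1}^2 + u_n^2) = \frac{\abs{S_n(\alpha,\lambda)}}{\abs{f(\lambda) + \varepsilon_n(\alpha,\lambda)}}.
\]
Specialising to $n = kN+j$, noting that $(k+1)N + j - 1 = n + N - 1$, and letting $k \to \infty$, the numerator tends to $g_j(\alpha,\lambda)$ almost uniformly by \eqref{eq:49}, while the denominator tends to $|f(\lambda)|$ uniformly. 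The conclusion \eqref{eq:50} follows.

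The only delicate point is to ensure that all estimates are uniform in $\alpha$ as well as $\lambda$; but since the dependence on $\alpha$ enters exclusively through $w_n \in \sS^1$, the uniform convergence on the compact sphere handles it automatically. There is no deep obstacle here—the proof is essentially a rearrangement of definitions once the matrix-level convergence \eqref{eq:48} is upgraded to uniform pointwise convergence on the unit sphere.
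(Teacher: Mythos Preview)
Your proof is correct and follows essentially the same approach as the paper: both arguments use \eqref{eq:48} to squeeze $\abs{Q_n^\lambda(v)}$ against $\abs{f(\lambda)}\,\norm{v}^2$, plug in $v = (u_{n-1},u_n)^t$, and combine with \eqref{eq:49}. The paper phrases this as an $\epsilon$--sandwich yielding $\limsup$/$\liminf$ bounds and then lets $\epsilon \to 0$, whereas you write the exact identity with a vanishing error term $\varepsilon_n$ and divide directly; the content is the same. One small remark: $v_n \neq 0$ holds for \emph{all} $n \geq 1$, not just large $n$, since two consecutive zeros in a generalized eigenvector force the whole sequence to vanish.
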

\begin{proof}
	Let $\epsilon > 0$ and let $I$ be a compact interval in $\Lambda$. By \eqref{eq:48}, there is $N$ such
	that for all $\lambda \in I$, $n \geq N$ and $v \in \RR^2$
	\[
		(\abs{f(\lambda)} - \epsilon)
		\big(v_1^2 + v_2^2 \big)
		\leq
		\abs{ Q^\lambda_n(v) }
		\leq
		(\abs{f(\lambda)} + \epsilon) \big(v_1^2 + v_2^2\big).
	\]
	Hence, for any generalized eigenvector $u$ corresponding to $\lambda$ with $(u_0, u_1) = \alpha$
	\[
		(\abs{f(\lambda)} - \epsilon) a_{n+N-1} \big(u_{n-1}^2 + u_n^2\big) 
		\leq
		\abs{S_n(\alpha, \lambda)}
		\leq
		(\abs{f(\lambda)} + \epsilon) a_{n+N-1} \big(u_{n-1}^2 + u_n^2\big).
	\]
	By \eqref{eq:49}, for any $j \in \{0, \ldots, N-1\}$
	\[
		\limsup_{k \to \infty} a_{(k+1)N+j-1} \big(u_{kN + j - 1}^2 + u_{kN + j}^2\big) 
		\leq
		\frac{g_j(\alpha, \lambda)}{\abs{f(\lambda)} - \epsilon}
	\]
	and
	\[
		\liminf_{k \to \infty} a_{(k+1)N+j-1} \big(u_{kN + j -1}^2 + u_{kN + j}^2\big)
		\geq
		\frac{g_j(\alpha, \lambda)}{\abs{f(\lambda)} + \epsilon}.
	\]
	Taking $\epsilon$ approaching zero we obtain \eqref{eq:50}.
\end{proof}

\subsection{Regular case}
\label{sec:3.1}
Fix a positive integer $N$ and a Jacobi matrix $A$. For each $\lambda \in \RR$ and $n \in \NN$
we define the \emph{transfer matrix} $B^\lambda_n$ by
\[
	B_n^\lambda = 
	\begin{pmatrix}
	0 & 1 \\
	-\frac{a_{n-1}}{a_n} & \frac{\lambda - b_n}{a_n}
	\end{pmatrix}.
\]
Then for any generalized eigenvector $u$ corresponding to $\lambda$ we have
\[
    \begin{pmatrix}
        u_n \\
        u_{n+1}	
	\end{pmatrix}
    =
    B_n^\lambda
    \begin{pmatrix}
   		u_{n-1}\\
        u_n
    \end{pmatrix}.
\]
Next, let us define a sequence of binary quadratic forms $Q^\lambda$ by the formula
\[
	Q^\lambda_n(v) = \langle E X_n^\lambda v, v \rangle,
\]
where
\begin{equation}
	\label{eq:22}
	X_n^\lambda = \prod_{j = n}^{n+N-1} B_j^\lambda
	\quad\text{and}\quad
    E = 
    \begin{pmatrix}
    	0 & -1 \\
        1 & 0
    \end{pmatrix}.
\end{equation}
The main result of this section is the following theorem.
\begin{tw}
	\label{tw:1}
	Assume that
	\begin{enumerate}[(a)]
		\item 
		$
		\begin{aligned}[t]
			\calV_N\bigg( \frac{1}{a_{n}} : n \in \NN \bigg) 
			+ \calV_N \bigg( \frac{b_n}{a_n} : n \in \NN \bigg)
			+ \calV_1\bigg(\frac{a_{n+N}}{a_n} : n \in \NN\bigg) < \infty;
		\end{aligned}
		$ \label{tw:1:1}
		\item $
		\begin{aligned}[t]
		c_1 < \frac{a_{n+1}}{a_n} < c_2
		\end{aligned}
		$
		for some constants $c_1, c_2 > 0$; \label{tw:1:2}
		\item the family 
		$
		\begin{aligned}
		\big\{Q^\lambda : \lambda \in I \big\}
		\end{aligned}
		$ is uniformly non-degenerated on a compact interval $I$.
	\end{enumerate}
   Then there is $c \geq 1$ such that for all $\lambda \in I$, and every generalized eigenvector $u$ corresponding to 
	$\lambda$ and $n \geq 1$
	\[
		c^{-1} a_n^{-1} (u_0^2 + u_1^2) \leq u_{n-1}^2+u_n^2 \leq c a_n^{-1} (u_0^2 + u_1^2).
	\]
\end{tw}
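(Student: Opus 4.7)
The plan is to reduce to Theorem~\ref{tw:3}. Since hypothesis (c) already supplies uniform non-degeneracy of the family $\{Q^\lambda : \lambda \in I\}$, the task is to verify the second hypothesis of Theorem~\ref{tw:3}, namely the two-sided bound $c^{-1} \leq |S_n(\alpha, \lambda)| \leq c$ on $\sS^1 \times I$. Once this is done, Theorem~\ref{tw:3} yields the desired estimate with $a_{n+N-1}^{-1}$ in place of $a_n^{-1}$, and condition (b) (which makes $a_{n+N-1}/a_n$ bounded above and below by positive constants) converts one form to the other.

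The core of the argument is a controlled estimate on the increment $S_{n+N} - S_n$. Using $w_{n+N} = X_n^\lambda w_n$ with $w_n = (u_{n-1}, u_n)^t$, I would write
\[
    S_{n+N} - S_n = \langle G_n w_n, w_n\rangle, \quad
    G_n = a_{n+2N-1} (X_n^\lambda)^t E X_{n+N}^\lambda X_n^\lambda - a_{n+N-1} E X_n^\lambda.
\]
The key algebraic step is the symplectic identity $(X_n^\lambda)^t E X_n^\lambda = (\det X_n^\lambda) E = (a_{n-1}/a_{n+N-1}) E$, valid for every $2 \times 2$ matrix. Inserting it via $X_{n+N}^\lambda = X_n^\lambda + (X_{n+N}^\lambda - X_n^\lambda)$ produces the decomposition
\[
    G_n = \left( \tfrac{a_{n+2N-1} a_{n-1}}{a_{n+N-1}} - a_{n+N-1}\right) E X_n^\lambda + a_{n+2N-1}(X_n^\lambda)^t E\bigl(X_{n+N}^\lambda - X_n^\lambda\bigr) X_n^\lambda.
\]

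To verify $\sum_n \|G_n\|/a_{n+N-1} < \infty$ uniformly on $I$, I use the three variation hypotheses asymmetrically. For the first summand, dividing by $a_{n+N-1}$ produces the coefficient $r_{n+N-1}/r_{n-1} - 1$ with $r_n = a_{n+N}/a_n$; condition (b) keeps $r_n$ in a compact subinterval of $(0, \infty)$, while $V_1(r_n) < \infty$ makes $|r_{n+N-1} - r_{n-1}|$ summable by a brief telescoping. For the second summand I would estimate $\|B_{n+N}^\lambda - B_n^\lambda\|$ entry-by-entry, using $V_N(1/a_n) + V_N(b_n/a_n) < \infty$ for the $(2,2)$ entry and a manipulation of $V_1(r_n)$ for the $(2,1)$ entry (via $a_{n+N-1}/a_{n+N} - a_{n-1}/a_n = (a_{n-1}/a_n)(r_{n-1}-r_n)/r_n$), and then \eqref{eq:29} combined with uniform boundedness of $\|X_n^\lambda\|$ on $I$ gives summability of $\|X_{n+N}^\lambda - X_n^\lambda\|$. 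Non-degeneracy then yields $a_{n+N-1}\|w_n\|^2 \leq c|S_n|$ for $n \geq M$, hence $|S_{n+N} - S_n| \leq \epsilon_n |S_n|$ with $\sum \epsilon_n < \infty$. A discrete Gronwall-type iteration on each of the $N$ residue classes modulo $N$ gives $|S_n| \asymp |S_{M+j}|$ for all sufficiently large $n$; the initial values $|S_{M+j}|$ are continuous and non-zero on the compact set $\sS^1 \times I$ (non-zero because two consecutive entries of a generalized eigenvector cannot both vanish and $Q^\lambda$ is non-degenerate on the tail), so they are uniformly bounded above and below. For the remaining small indices $n < M$ the inequality follows directly from continuity of $w_n$ in $(\alpha, \lambda)$ and boundedness of $a_n$ on a finite range. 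The main obstacle will be pinpointing the symplectic cancellation in $S_{n+N} - S_n$: without it the two terms in $G_n$ are both of order $a_{n+N-1}$ and the Gronwall strategy collapses. After the cancellation, bookkeeping the three distinct variation hypotheses into summable pieces is routine but must be done carefully.
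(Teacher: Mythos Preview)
Your argument is correct, but it diverges from the paper's proof in one structural choice: you compare $S_{n+N}$ with $S_n$ via the symplectic identity $(X_n^\lambda)^t E X_n^\lambda = (\det X_n^\lambda)E$, whereas the paper compares $S_{n+1}$ with $S_n$ by writing $S_{n+1}$ as a quadratic form in $(u_{n-1},u_n)$ after a single transfer step and computing the matrix $-a_{n+N}(B_n^\lambda)^t E B_{n+N}^\lambda E - a_{n+N-1}\Id$ explicitly. Both routes produce a summable increment bounded by the three variation hypotheses, and both feed into the same product/Gronwall conclusion. What the paper's step-by-one approach buys is convergence of the full sequence $(S_n)$ (this is recorded as Corollary~\ref{cor:2}); your step-by-$N$ approach only gives convergence along each residue class, which is enough for the two-sided bound but slightly weaker. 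Conversely, your route is more conceptual (the cancellation is packaged entirely in the symplectic identity rather than in an explicit $2\times 2$ computation) and is exactly the method the paper adopts for Theorem~\ref{tw:2}, where the step-by-one computation no longer suffices; the paper itself comments, in the remark following Theorem~\ref{tw:2}, that Theorem~\ref{tw:1} admits such a proof.
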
	
\begin{proof}
	Let $S_n$ be a sequence of functions defined by \eqref{eq:6}. In view of
	Theorem \ref{tw:3}, it is enough to show that there is $c \geq 1$ such that
	\begin{equation}
		\label{eq:10}
		c^{-1} \leq \abs{S_n(\alpha, \lambda)} \leq c
	\end{equation}
	for all $\alpha \in \sS^1$ and $\lambda \in I$. The study of the sequence $(S_n : n \in \NN)$ is motivated
	by the method developed in \cite{SLC}.

	Given a generalized eigenvector corresponding to $\lambda \in I$ normalized so that $(u_0, u_1) = \alpha \in \sS^1$,
	we can easily see that for each $n \geq 2$ $u_n$ considered as a function of $\alpha$ and $\lambda$
	is continuous on $\sS^1 \times I$. As a consequence, the function $S_n$ is continuous on
	$\sS^1 \times I$. Since $\{Q^\lambda : \lambda \in I\}$ is uniformly non-degenerated, for each $n \geq M$ the function
	$S_n$ is non-zero and has the same sign for all $\lambda \in I$ and $\alpha \in \sS^1$. Indeed, otherwise there would
	be a non-zero $\alpha \in \sS^1$ and $\lambda \in I$ such that $S_n(\alpha, \lambda) = 0$,
	which would contradict the non-degeneracy of $Q_n^\lambda$. 
	
	Next, we define a sequence of functions $(F_n : n \geq M)$ on $\sS^1 \times I$ by setting
    \[
   		F_n = \frac{S_{n+1} - S_n}{S_n}.
    \]
	Then
    \[
      	\frac{S_n}{S_M} = \prod_{j=M}^{n-1} (1 + F_j).
    \]
	Since each function $F_n$ is continuous, to conclude \eqref{eq:10} it is enough to show
	that the product
	\[
		\prod_{j = M}^n (1 + F_j)
	\]
	converges uniformly on $\sS^1 \times I$. To do so, we are going to prove that the series
	\begin{equation}
		\label{eq:11}
		\sum_{j = M}^\infty \abs{F_j}
	\end{equation}
	converges uniformly on $\sS^1 \times I$. Let $u$ be a generalized eigenvector corresponding to $\lambda \in I$
	such that $(u_0, u_1) = \alpha \in \sS^1$. Using $E^{-1} = -E$, we can write
    \begin{align*}
    	S_{n+1}(\alpha, \lambda) & = a_{n+N}
        \bigg\langle 
        E X_{n+1}^\lambda
        \begin{pmatrix}
        u_{n}\\
        u_{n+1}
        \end{pmatrix}
        ,
        \begin{pmatrix}
        u_n\\
        u_{n+1}
        \end{pmatrix}
        \bigg\rangle\\
        & =
        a_{n+N}
        \bigg\langle 
        E B_{n+N}^\lambda E^{-1} E X_n^\lambda
        \begin{pmatrix}
        u_{n-1}\\
        u_{n}
        \end{pmatrix}
        ,
        B_n^\lambda
        \begin{pmatrix}
        u_{n-1}\\
        u_{n}
		\end{pmatrix}
		\bigg\rangle\\
        & =
		-a_{n+N}
        \bigg\langle 
		(B_n^\lambda)^t	
        E B_{n+N}^\lambda E E X_n^\lambda
		\begin{pmatrix}
        u_{n-1}\\
        u_{n}
		\end{pmatrix}
		,
		\begin{pmatrix}
        u_{n-1}\\
        u_{n}
        \end{pmatrix}
        \bigg\rangle.
    \end{align*}
	Hence, $S_{n+1}(\alpha, \lambda) - S_n(\alpha, \lambda)$ is equal to
	\begin{equation}
        \bigg\langle 
        \Big(
        -a_{n+N}
        (B_n^\lambda)^t	
        E B_{n+N}^\lambda E
        -a_{n+N-1} \Id\Big)
        E X_n^\lambda
        \begin{pmatrix}
        u_{n-1}\\
        u_{n}
        \end{pmatrix}
        ,
        \begin{pmatrix}
        u_{n-1}\\
        u_{n}
        \end{pmatrix}
        \bigg\rangle.
	\end{equation}
	We now compute the matrix $-a_{n+N} (B_n^\lambda)^t E B_{n+N}^\lambda E$
    \begin{align*}
		&
        -a_{n+N}
        \begin{pmatrix}
        0 & -\frac{a_{n-1}}{a_n} \\
        1 & \frac{\lambda-b_n}{a_n}
        \end{pmatrix}
        \begin{pmatrix}
        0 & -1\\
        1 & 0
        \end{pmatrix}
        \begin{pmatrix}
        0 & 1 \\
        -\frac{a_{n+N-1}}{a_{n+N}} & \frac{\lambda - b_{n+N}}{a_{n+N}}
        \end{pmatrix}
        \begin{pmatrix}
        0 & -1\\
        1 & 0
        \end{pmatrix}\\
        &\qquad\qquad\qquad
		=
        \begin{pmatrix}
        -\frac{a_{n-1}}{a_n} & 0\\
        \frac{\lambda - b_n}{a_n} & -1
        \end{pmatrix}
        \begin{pmatrix}
        -a_{n+N} & 0 \\
        -(\lambda - b_{n+N}) & -a_{n+N-1}
        \end{pmatrix} \\
        & \qquad \qquad \qquad
		=
        \begin{pmatrix}
        \frac{a_{n-1}}{a_n} a_{n+N} & 0 \\
        \lambda - b_{n+N} - \frac{\lambda - b_n}{a_n} a_{n+N} & a_{n+N-1}
        \end{pmatrix}.
	\end{align*}
    Therefore,
    \[
    	-a_{n+N} (B_n^\lambda) ^t E B_{n+N}^\lambda E - a_{n+N-1} \Id =
        \begin{pmatrix}
        \frac{a_{n-1}}{a_n} a_{n + N} - a_{n+N-1} & 0 \\
        \lambda - b_{n+N} - \frac{\lambda-b_n}{a_n}a_{n+N} & 0
        \end{pmatrix}.
	\]
   In particular, we can estimate
	\begin{multline}
        \big\lVert
        a_{n+N} (B_n^\lambda)^t E B_{n+N}^\lambda E + a_{n+N-1} \Id
        \big\rVert
        \leq
        a_{n-1}
        \bigg\lvert
        \frac{a_{n+N}}{a_n} - \frac{a_{n+N-1}}{a_{n-1}}
        \bigg\rvert \\
        +
        |\lambda| a_{n+N}
        \bigg\lvert
        \frac{1}{a_{n+N}}
        -
        \frac{1}{a_n}
        \bigg\rvert 
		+
        a_{n+N}
        \bigg\lvert
        \frac{b_n}{a_n} - \frac{b_{n+N}}{a_{n+N}}
        \bigg\lvert.
	\end{multline}
	Finally, since $\{Q^\lambda : \lambda \in I\}$ is uniformly non-degenerated
	\begin{equation}
		\abs{S_n(\alpha, \lambda)} \geq c^{-1} a_{n+N-1} (u_n^2 + u_{n-1}^2)
	\end{equation}
	and
	\[
		\abs{S_{n+1}(\alpha, \lambda) - S_n(\alpha, \lambda)}
		\leq
		c
		\big\lVert a_{n+N} (B_n^\lambda)^t E B_{n+N}^\lambda E + a_{n+N-1} \Id \big\rVert
		(u_n^2 + u_{n-1}^2),
	\]
	one has
	\begin{align*}
		\abs{F_n(\alpha, \lambda)}
       & \leq 
        c^2
        \frac{1}{a_{n+N-1}}
        \big\lVert a_{n+N} (B_n^\lambda)^t E B_{n+N}^\lambda E + a_{n+N-1} \Id \big\rVert \\
        & \leq
        c^2 \bigg(
        \frac{a_{n-1}}{a_{n+N-1}}
        \bigg\lvert
        \frac{a_{n+N}}{a_n} - \frac{a_{n+N-1}}{a_{n-1}}
        \bigg\rvert
        +
        |\lambda|
        \frac{a_{n+N}}{a_{n+N-1}}
        \bigg\lvert
        \frac{1}{a_{n+N}} - \frac{1}{a_n}
        \bigg\rvert \\
        & \phantom{\leq C^2 \bigg(}+
        \frac{a_{n+N}}{a_{n+N-1}}
        \bigg\lvert
        \frac{b_n}{a_n} - \frac{b_{n+N}}{a_{n+N}}
        \bigg\lvert \bigg).
	\end{align*}
	Using \eqref{tw:1:2}, we can estimate 
	\begin{equation}
		\begin{aligned}
		\sum_{n = M}^\infty \abs{F_n} 
		& \leq
		c^2 c_1^{-N} \calV_1\bigg(\frac{a_{n+N}}{a_n} : n \in \NN\bigg)
		+
		c^2 c_2 \calV_N\bigg(\frac{1}{a_n} : n \in \NN\bigg) \\
		& \phantom{\leq}+
		c^2 c_2 \calV_N\bigg(\frac{b_n}{a_n} : n \in \NN\bigg),
		\end{aligned}
	\end{equation}
	thus, \eqref{tw:1:1} implies \eqref{eq:11} and the proof is complete.
\end{proof}
\begin{corollary}
	\label{cor:2}
	Under the hypothesis of Theorem \ref{tw:1}, the sequence of continuous functions 
	$(S_n : n \in \NN)$ converges almost uniformly to the function without zeros. 
\end{corollary}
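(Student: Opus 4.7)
The plan is to extract the corollary directly from the estimates set up in the proof of Theorem~\ref{tw:1}, essentially by turning the telescoping identity used there into a genuine convergence statement. Concretely, the proof of Theorem~\ref{tw:1} produces an integer $M$ and a constant $c \geq 1$, depending on $I$, such that $|S_n(\alpha,\lambda)| \geq c^{-1}$ for all $n \geq M$, $\alpha \in \sS^1$, $\lambda \in I$, and such that the termwise bound
\[
	|F_n(\alpha,\lambda)|
	\leq
	c^2 \left(\frac{a_{n-1}}{a_{n+N-1}} \left|\frac{a_{n+N}}{a_n} - \frac{a_{n+N-1}}{a_{n-1}}\right|
	+ |\lambda| \frac{a_{n+N}}{a_{n+N-1}} \left|\frac{1}{a_{n+N}} - \frac{1}{a_n}\right|
	+ \frac{a_{n+N}}{a_{n+N-1}}\left|\frac{b_n}{a_n} - \frac{b_{n+N}}{a_{n+N}}\right| \right)
\]
holds for every $\lambda \in I$ and $\alpha \in \sS^1$. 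Since the right-hand side is independent of $\alpha$ and is dominated by a fixed summable envelope on $I$, the series $\sum_{n \geq M} F_n$ converges absolutely and uniformly on $\sS^1 \times I$.

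Next I would observe that every $S_n$ is continuous on $\sS^1 \times I$, because the generalized eigenvector $u_n$ depends continuously on the initial data $(u_0,u_1)=\alpha$ and the spectral parameter $\lambda$, and $Q^\lambda_n$ is continuous in $\lambda$. Writing the telescoping identity
\[
	S_n = S_M \prod_{j=M}^{n-1} (1 + F_j),
\]
I would then invoke the standard fact that uniform summability of $\sum |F_j|$ together with the bound $|1 + F_j| = |S_{j+1}/S_j| \geq c^{-2}$ (which follows from the two-sided control $c^{-1} \leq |S_n| \leq c$ derived in the proof) implies that the infinite product converges uniformly on $\sS^1 \times I$ to a continuous function. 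Hence $S_n \to S_\infty$ uniformly on $\sS^1 \times I$, and $S_\infty$ is continuous.

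Finally, passing the lower bound $|S_n(\alpha,\lambda)| \geq c^{-1}$ to the limit gives $|S_\infty(\alpha,\lambda)| \geq c^{-1} > 0$ on all of $\sS^1 \times I$, so $S_\infty$ has no zeros on the compact set, and by applying the argument to each compact interval $I$ satisfying hypothesis (c) we obtain the asserted almost uniform convergence. I do not foresee a genuine obstacle: the whole corollary is a soft consequence of the quantitative work already done in Theorem~\ref{tw:1}, with the only point requiring care being the verification that no factor $1 + F_j$ approaches zero, which is handled by the uniform lower bound on $|S_n|$.
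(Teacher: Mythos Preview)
Your proposal is correct and follows exactly the route the paper has in mind: the corollary is stated without proof precisely because it drops out of the telescoping identity $S_n = S_M \prod_{j=M}^{n-1}(1+F_j)$ and the uniform summability of $\sum|F_j|$ established in the proof of Theorem~\ref{tw:1}, together with the two-sided bound \eqref{eq:10}. The only simplification you could make is that the factor-by-factor lower bound $|1+F_j|\geq c^{-2}$ is not really needed---passing the uniform lower bound $|S_n|\geq c^{-1}$ directly to the limit already yields $|S_\infty|\geq c^{-1}$.
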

Theorem \ref{tw:1} depends on the almost uniform non-degeneracy of $\{Q^\lambda : \lambda \in \RR\}$. We are going
to apply it for a Jacobi matrix $A$ such that for each $j \in \{0, \ldots, N-1\}$ the following limits exist
\begin{equation}
	\label{eq:52}
	q_j = \lim_{k \to \infty} \frac{b_{kN + j}}{a_{kN+j}},
	\quad\text{and}\quad
	r_j = \lim_{k \to \infty} \frac{a_{kN+j-1}}{a_{kN+j}},
\end{equation}
with $r_j > 0$. The next proposition is an easy to check criterion for the family $\{Q^\lambda : \lambda \in \RR\}$
to be almost uniformly non-degenerated.
\begin{fakt}
	\label{fakt:2}
	Suppose that
	\[
		\lim_{n \to \infty} a_n = \infty,
	\]
	Let 
	\[
        \calF = \prod_{j=0}^{N-1}
        \begin{pmatrix}
        	0 & 1 \\
            -r_j & -q_j
        \end{pmatrix}
		\quad\text{and}\quad
		E = 
		\begin{pmatrix}
			0 & -1 \\
			1 & 0
		\end{pmatrix},
	\]
	where $q_j$ and $r_j$ are given by \eqref{eq:52}. Then the family
	$\{Q^\lambda : \lambda \in \RR\}$ is almost uniformly non-degenerated whenever
	\begin{equation}
		\label{eq:15}
		\det (\sym (E\calF)) > 0.
	\end{equation}
\end{fakt}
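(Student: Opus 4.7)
The plan is to show that, under the stated convergence hypotheses, the transfer matrix product $X_n^\lambda$ has precisely $N$ subsequential limits (one for each residue class of $n$ modulo $N$), each a cyclic conjugate of $\calF$. I then use a symplectic-type identity to propagate the non-degeneracy $\det \sym(E \calF) > 0$ to every subsequential limit, and conclude by a continuity argument.

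First I would observe that since $a_n \to \infty$, for $\lambda$ in any compact interval $I \subset \RR$ the ratio $\lambda/a_n$ tends to $0$ uniformly. Combined with the limits in \eqref{eq:52}, this gives, for each $j \in \{0, \ldots, N-1\}$,
\[
    \lim_{k \to \infty} B^\lambda_{kN+j} = A_j := \begin{pmatrix} 0 & 1 \\ -r_j & -q_j \end{pmatrix}
\]
uniformly in $\lambda \in I$. Hence $X^\lambda_{kN+i}$ converges uniformly to the cyclic product $\calF_i := A_{i-1} A_{i-2} \cdots A_0 A_{N-1} \cdots A_i$, with $\calF_0 = \calF$. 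Setting $Q_i = A_{i-1} \cdots A_0$ (and $Q_0 = \Id$), the factorization $\calF = (A_{N-1}\cdots A_i)\, Q_i$ yields $\calF_i = Q_i\, \calF\, Q_i^{-1}$.

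The algebraic heart of the argument is the identity, valid for every invertible $2 \times 2$ matrix $M$,
\[
    \sym(E M \calF M^{-1}) = (\det M)\, (M^{-1})^t \, \sym(E\calF) \, M^{-1},
\]
which follows from the symplectic relation $M^t E M = (\det M) E$ for $2 \times 2$ matrices together with $E^t = -E$. Applied with $M = Q_i$, and using $\det A_j = r_j > 0$ so that $\det Q_i = \prod_{j=0}^{i-1} r_j > 0$, this expresses $\sym(E\calF_i)$ as a positive congruence of $\sym(E\calF)$. Consequently
\[
    \det \sym(E\calF_i) = \det \sym(E\calF) > 0,
\]
and each $\sym(E\calF_i)$ is a definite symmetric matrix of the same sign as $\sym(E\calF)$, with eigenvalues bounded away from zero by a quantity depending only on $\calF$ and the $r_j$'s.

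Finally, by the uniform convergence $X^\lambda_{kN+i} \to \calF_i$, for any compact $I \subset \RR$ there exists $M$ such that for all $n \geq M$ and $\lambda \in I$, $\sym(EX^\lambda_n)$ lies close in norm to $\sym(E\calF_{n \bmod N})$. Since the latter is definite with eigenvalues uniformly bounded away from zero, the same holds for the former, yielding
\[
    c^{-1}\, \norm{v}^2 \leq \abs{Q^\lambda_n(v)} = \abs{\langle \sym(EX^\lambda_n) v,\, v \rangle} \leq c \, \norm{v}^2
\]
with $c$ depending only on $I$. The main difficulty is the symplectic identity for $\sym(E\calF_i)$; the surrounding steps are routine uniform-convergence bookkeeping combined with the observation that positive congruence preserves both the definiteness and the sign of the determinant.
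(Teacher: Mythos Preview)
Your proof is correct and follows essentially the same strategy as the paper: identify the $N$ subsequential limits of $X_n^\lambda$ as cyclic conjugates $\calF_i = Q_i \calF Q_i^{-1}$ of $\calF$, show that the non-degeneracy condition $\det\sym(E\calF)>0$ propagates to each $\calF_i$, and conclude by uniform convergence. The only difference is in the algebraic lemma used for the propagation step: the paper invokes the identity $\det(\sym(EX)) = -\tfrac14\,\discr X$ together with conjugation-invariance of the discriminant, whereas you use the symplectic relation $M^t E M = (\det M)E$ to exhibit $\sym(E\calF_i)$ as a positive congruence of $\sym(E\calF)$. These are two faces of the same $2\times 2$ linear algebra; your version has the minor bonus of showing directly that all the $\sym(E\calF_i)$ share the same sign of definiteness, though this is not actually needed for the definition of uniform non-degeneracy.
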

\begin{proof}
	For each $j \in \{0, \ldots, N-1\}$ we set
	\begin{equation}
		\label{eq:16}
		\calB_j = 
		\begin{pmatrix}
        	0 & 1 \\
        	-r_j & -q_j
		\end{pmatrix}
		=
		\lim_{k \to \infty} B_{k N + j}^\lambda.
	\end{equation}
	We notice that the convergence in \eqref{eq:16} is almost uniform with respect to $\lambda$. For each $\lambda \in \RR$
	and $j \in \{0, \ldots, N-1\}$ we set
	\[
		\calQ_j^\lambda(v) = \lim_{k \to \infty} Q_{kN+j}^\lambda(v),
	\]
	where the convergence is almost uniform with respect to $\lambda$. The quadratic form $\calQ^\lambda_j$ corresponds
	to the matrix
	\begin{equation}
		\label{eq:17}
		\sym\Big(E \prod_{k=0}^{j-1} \calB_k \prod_{k = j}^{N-1} \calB_k\Big).
	\end{equation}
	Hence, it is enough to show that the determinant of the matrix \eqref{eq:17} is positive. Since
	for any matrix $X \in M_{2}(\RR)$
	\[
		\det (\sym (E X)) = - 4 \discr X
	\]
	and the discriminant is invariant under conjugation, we conclude
	\begin{align*}
		\det \sym \Big(E \prod_{k=0}^{j-1} \calB_k \prod_{k = j}^{N-1} \calB_k\Big)
		& = 
		-4 \discr \Big(\prod_{k = 0}^{j-1} \calB_k \prod_{k = j}^{N-1} \calB_k\Big) \\
		& =
		-4 \discr \Big(\prod_{k = 0}^{N-1} \calB_k \Big) 
		=
		\det \sym (E \calF). \qedhere
	\end{align*}
\end{proof}

\begin{remark}
	\label{rem:2}
	There are cases when the condition \eqref{eq:15} can be easily verified. For example, when $\det \calF = 1$ and
	$\abs{\tr \calF} < 2$. Indeed, for the proof it is enough to observe that 
	\[
		\det (\sym (E \calF)) = -\frac{1}{4} (\tr \calF - 2)(\tr \calF + 2).
	\]
\end{remark}

\begin{remark}
	\label{rem:3}
	We can obtain an important example by taking $N$ to be a positive odd integer and $q_j = 0$ for all
	$j \in \{0, \ldots, N-1\}$. To show \eqref{eq:15}, we observe that
	\[
		\begin{pmatrix}
			0    & 1 \\
			-r_{j+1} & 0
		\end{pmatrix}
		\begin{pmatrix}
			0  & 1 \\
			-r_j & 0
		\end{pmatrix}
		=
		\begin{pmatrix}
			-r_j & 0 \\
			0 & -r_{j+1}
		\end{pmatrix}.
	\]
	Therefore	
	\[
		E \calF 
		=
		(-1)^{\lfloor N/2 \rfloor}
			\begin{pmatrix}
				r_0 r_2 \ldots r_{N-1} & 0 \\
				0 & r_1 r_3 \ldots r_{N-2}
			\end{pmatrix},
	\]
	which makes clear that the condition \eqref{eq:15} is satisfied.
\end{remark}

\begin{remark}
	\label{rem:1}
	Let 
	\begin{equation}	
		\label{eq:21}
		q \in (-2, 2) \setminus 
		\bigg\{2 \cos \frac{\pi}{N}, 2 \cos \frac{2\pi}{N}, \ldots, 2\cos\frac{(N-1)\pi}{N}\bigg\}.
	\end{equation}
	Suppose that for all $j \in \{0, \ldots, N-1\}$, $r_j = 1$ and $q_j = q$. In this case we set
	\[
		\calB
		= 
		\begin{pmatrix}
			0 & 1 \\
			-1 & -q
		\end{pmatrix}
		=
		\lim_{n \to \infty} B^\lambda_n.
	\]
	It may be easily verified that
	\begin{equation}
		\label{eq:19}
		\calB^n = (-1)^n
		\begin{pmatrix}
			-w_{n-2}(q) & -w_{n-1}(q) \\
			w_{n-1}(q) & w_n(q)
		\end{pmatrix},
	\end{equation}
	where $(w_n : n \in \NN)$ is the sequence of polynomials defined in \eqref{eq:1}. Therefore, by the recurrence
	relation \eqref{eq:20}, we can write
	\[
		\det(\sym(E \calB^N)) = w_{N-1}^2(q) \left( 1 - \frac{1}{4} q^2 \right),
	\]
	which is positive since, by \eqref{eq:2} and \eqref{eq:21}, $w_{N-1}(q) \neq 0$ and $\abs{q} < 2$.
\end{remark}

\subsection{Critical case}
\label{sec:3.2}
Remark \ref{rem:1} leaves out a very interesting case that is when $w_{N-1}(q) = 0$. In fact, we cannot apply
Theorem \ref{tw:1}, since in this case the family $\{Q^\lambda : \lambda \in \RR\}$ is not almost uniformly non-degenerated. To deal
with this case, we need to modify its definition. Our strategy is motivated by \cite[Criterion 2]{MM1}. 

Let us fix
\begin{equation}
	\label{eq:25}
	q = 2 \cos \frac{k_0 \pi}{N},
\end{equation}
for some $k_0 \in \{1, \ldots, N-1\}$. We set
\begin{equation}
	\label{eq:23}
	\gamma = (-1)^N w_N(q) = (-1)^{N+k_0}.
\end{equation}
For each $\lambda \in \RR$ and $n \in \NN$ we define a binary quadratic form
\[
	Q_n^\lambda(v) = a_{n+N-1}\sprod{E X_n^\lambda v}{v},
\]
where $X_n^\lambda$ and $E$ are given by \eqref{eq:22}.

We notice that $\sprod{E v}{v} = 0$. This simple observation lies behind the following non-degeneracy test.
\begin{fakt}
	\label{fakt:1}
	Suppose that there is $\gamma \in \RR$ such that for all $j \in \{0, \ldots, N-1\}$
	\[
		\lim_{k \to \infty} a_{kN + j}(X_{k N + j}^\lambda - \gamma \Id) = \calC_j^\lambda
	\]
	almost uniformly with respect to $\lambda \in \Lambda$. If for all $j \in \{0, \ldots, N-1\}$
	and all $\lambda \in \Lambda$
	\begin{equation}
		\label{eq:24}
		\discr \calC_j^\lambda < 0,
	\end{equation}
	then the family $\{Q^\lambda : \lambda \in \Lambda\}$ is almost uniformly non-degenerated on $\Lambda$.
\end{fakt}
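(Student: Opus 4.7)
The proof is based on the antisymmetry of $E$: since $E^t = -E$, the scalar $\sprod{Ev}{v}$ vanishes for every $v \in \RR^2$, and consequently $v \mapsto \sprod{EMv}{v}$ depends only on the symmetric part $\sym(EM)$. This allows subtracting $\gamma\Id$ from $X_n^\lambda$ inside $Q_n^\lambda$ without changing its value. Writing $n = kN + j$ and factoring out $a_{kN+j}$, we obtain
\[
	Q_{kN+j}^\lambda(v) = \frac{a_{kN+j+N-1}}{a_{kN+j}} \sprod{\sym\big(E \cdot a_{kN+j}(X^\lambda_{kN+j} - \gamma\Id)\big) v}{v}.
\]
By hypothesis, the matrix appearing inside $\sym$ converges almost uniformly on $\Lambda$ to $E\calC_j^\lambda$ for each residue $j \in \{0, \ldots, N-1\}$.

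The crucial algebraic ingredient is the identity $\det\sym(EM) = -\tfrac14 \discr(M)$ for every $M \in M_2(\RR)$, which one verifies by a short direct computation (it is the relation already in play in the proof of Proposition \ref{fakt:2} and Remark \ref{rem:2}). It translates the assumption $\discr\calC_j^\lambda < 0$ into $\det\sym(E\calC_j^\lambda) > 0$, so $\sym(E\calC_j^\lambda)$ is (either positive or negative) definite. Continuity of the almost uniform limit $\lambda \mapsto \calC_j^\lambda$, together with local constancy of the sign of a non-vanishing determinant, yields for each compact $I \subset \Lambda$ a constant $\tilde c \geq 1$ with
\[
	\tilde c^{-1} \norm{v}^2 \leq \abs{\sprod{\sym(E\calC_j^\lambda)v}{v}} \leq \tilde c \norm{v}^2
\]
for all $\lambda \in I$ and $v \in \RR^2$. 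Uniform convergence on $I$ then propagates the same two-sided bound, with constant $2\tilde c$, to $\sym(E \cdot a_{kN+j}(X^\lambda_{kN+j} - \gamma\Id))$ once $k$ is sufficiently large.

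The remaining step, which I expect to be the most delicate, is to bound the prefactor $\rho^{(j)}_k := a_{kN+j+N-1}/a_{kN+j}$ above and away from zero. Here I would use that $\det X_n^\lambda = \prod_{i = n}^{n+N-1} \det B_i^\lambda = a_{n-1}/a_{n+N-1}$: under the standing assumption $a_n \to \infty$ of this section, the hypothesis $a_n(X_n^\lambda - \gamma \Id) \to \calC_j^\lambda$ forces $X_n^\lambda \to \gamma\Id$, hence $\det X_n^\lambda \to \gamma^2 = 1$, so $a_{kN+j+N-1}/a_{kN+j-1} \to 1$. Inspection of the $(1,1)$-entry of $a_n(X_n^\lambda - \gamma\Id)$ then extracts convergence of the finite differences $a_n - a_{n-1}$, which combined with $a_n \to \infty$ yields $a_{kN+j-1}/a_{kN+j} \to 1$; multiplying these gives $\rho^{(j)}_k \to 1$. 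Combining with the previous paragraph, we obtain $c \geq 1$ and $M \geq 1$ such that $c^{-1} \norm{v}^2 \leq \abs{Q_n^\lambda(v)} \leq c \norm{v}^2$ for all $n \geq M$, $\lambda \in I$, $v \in \RR^2$, which is the required almost uniform non-degeneracy on $\Lambda$.
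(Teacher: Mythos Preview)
Your core strategy---use $\langle Ev, v\rangle = 0$ to replace $X_n^\lambda$ by $X_n^\lambda - \gamma\Id$ inside $Q_n^\lambda$, then translate $\discr\calC_j^\lambda < 0$ into definiteness of $\sym(E\calC_j^\lambda)$ via $\det\sym(EM) = -\tfrac14\discr M$---is exactly the paper's. The paper's proof is correspondingly short: it writes $Q_n^\lambda(v) = \tfrac{a_{n+N-1}}{a_{n+N}}\langle a_{n+N} E(X_n^\lambda - \gamma\Id)v,\, v\rangle$, passes directly to the limit $\langle E\calC_j^\lambda v, v\rangle$, and then checks $\det\sym(E\calC_j^\lambda) > 0$.

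You go further by trying to justify the prefactor $\rho_k^{(j)} \to 1$, which the paper does not address. Your determinant trick is correct and gives $a_{n+N-1}/a_{n-1} \to \gamma^{-2}$ (equal to $1$ since $\gamma = \pm 1$ in the setting of Section~\ref{sec:3.2}). The remaining step, however---extracting convergence of $a_n - a_{n-1}$ from the $(1,1)$-entry of $a_n(X_n^\lambda - \gamma\Id)$---is not substantiated: for $N = 2$ that entry is indeed $a_n - a_{n-1}$, but for $N \geq 3$ it is a rational expression involving several $a_m$'s and $b_m$'s, and it is not clear how to isolate a single consecutive ratio from it using only the hypotheses of Proposition~\ref{fakt:1}. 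In the paper's applications (Proposition~\ref{fakt:3}, Theorem~\ref{tw:2}) one has $a_{n+1}/a_n \to 1$ directly from the stronger assumptions made there, so the issue never surfaces; but as a self-contained argument for the prefactor under the bare hypotheses of Proposition~\ref{fakt:1}, your last step has a gap.
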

\begin{proof}
	Since for each $j \in \{0, \ldots, N-1\}$
	\[
		Q^\lambda_n(v) = a_{n+N-1} \sprod{E X_n^\lambda v}{v} 
		= \frac{a_{n+N-1}}{a_{n+N}} \big\langle a_{n+N} E (X_n^\lambda - \gamma \Id) v, v \big\rangle,
	\]
	we have
	\[
		\lim_{k \to \infty} Q^\lambda_{kN+j}(v) = \sprod{E \calC_j^\lambda v}{v}
	\]
	almost uniformly with respect to $\lambda \in \Lambda$. In particular, $\Lambda \ni \lambda \mapsto \calC^\lambda_j$
	is a continuous mapping. Therefore, it is enough to check whether
	\[
		\det(\sym (E \calC_j^\lambda)) > 0,
	\]
	which is easily implied by \eqref{eq:24}, because
	\[
		\det (\sym (E \calC_j^\lambda)) = - 4 \discr \calC_j^\lambda > 0.
		\qedhere
	\]
\end{proof}
We start from the simplified version of our problem. 
\begin{fakt}
	\label{fakt:4}
	Suppose that
	\begin{enumerate}[(a)]
		\item $\begin{aligned}[t]
		\lim_{n \rightarrow \infty} a_n = \infty;
		\end{aligned}$ \label{fakt:4:1}
		\item $\begin{aligned}[t]
			\calV_1\left( \frac{1}{a_n} : n \in \NN\right) < \infty.
		\end{aligned}$ \label{fakt:4:2}
	\end{enumerate}
	Given $\lambda \in \RR$, we set
	\[
		\tilde{B}_n^\lambda = 
		\begin{pmatrix}
		0 & 1 \\
		-1 & \frac{\lambda}{a_n} - q
		\end{pmatrix}
	\]
	and
	\[
		\tilde{C}_n^\lambda = a_n \big((\tilde{B}_n^\lambda)^N - \gamma \Id\big),
	\]
	where $q$ and $\gamma$ are defined by \eqref{eq:25} and \eqref{eq:23}, respectively. 
	Then for every compact interval $I$,
	\begin{equation}
		\label{eq:26}
		\sup_{\lambda \in I} \calV_1\big(\tilde{C}_n^\lambda : n \in \NN\big) < \infty.
	\end{equation}
	Moreover,
	\begin{equation}
		\label{eq:27}
		\lim_{n \to \infty} \tilde{C}_n^\lambda =
		(-1)^{N-1} \frac{\lambda N}{4 - q^2} w_N(q)
		\begin{pmatrix}
		q & 2 \\
		-2 & -q
		\end{pmatrix}
	\end{equation}
	almost uniformly with respect to $\lambda$.
\end{fakt}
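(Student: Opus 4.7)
The approach is to linearize $(\tilde{B}_n^\lambda)^N$ around the constant matrix $\calB$ from Remark \ref{rem:1}. Introduce $E_0$ as the matrix whose only nonzero entry is a $1$ in position $(2,2)$, so that $\tilde{B}_n^\lambda = \calB + (\lambda/a_n)\, E_0$. The identity $\calB^N = \gamma\, \Id$ holds by \eqref{eq:19}, the vanishing $w_{N-1}(q) = 0$ forced by the choice \eqref{eq:25}, and the definition \eqref{eq:23}. A non-commutative Taylor expansion of the matrix polynomial $P(x) := (\calB + x E_0)^N$ at $x = 0$ then reads
\begin{equation*}
    (\tilde{B}_n^\lambda)^N - \gamma\, \Id = \frac{\lambda}{a_n}\, M + \frac{\lambda^2}{a_n^2}\, R\!\bigg(\frac{\lambda}{a_n}\bigg),
\end{equation*}
where $M := \sum_{j=0}^{N-1} \calB^{N-1-j} E_0 \calB^j = P'(0)$ and $R$ is a matrix-valued polynomial in its scalar argument. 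Multiplying by $a_n$ gives
\begin{equation*}
    \tilde{C}_n^\lambda = \lambda M + \frac{\lambda^2}{a_n}\, R\!\bigg(\frac{\lambda}{a_n}\bigg).
\end{equation*}

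By hypothesis \eqref{fakt:4:1}, the sequence $\lambda/a_n$ is uniformly bounded for $\lambda$ in the compact interval $I$, so $R(\lambda/a_n)$ is uniformly bounded and the remainder tends to zero almost uniformly in $\lambda$. This reduces \eqref{eq:27} to the explicit computation of $M$. For the total variation, set $h(x) := x R(x)$, so that $\lambda^2 R(\lambda/a_n)/a_n = \lambda\, h(\lambda/a_n)$ and
\begin{equation*}
    \tilde{C}_{n+1}^\lambda - \tilde{C}_n^\lambda = \lambda\bigl[h(\lambda/a_{n+1}) - h(\lambda/a_n)\bigr].
\end{equation*}
A Lipschitz estimate for the polynomial $h$ on the bounded range of $\lambda/a_n$ then yields
\begin{equation*}
    \big\|\tilde{C}_{n+1}^\lambda - \tilde{C}_n^\lambda\big\| \leq C_I \Big|\frac{1}{a_{n+1}} - \frac{1}{a_n}\Big|,
\end{equation*}
and summing over $n$ combined with hypothesis \eqref{fakt:4:2} yields \eqref{eq:26}.

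It remains to identify $M$. Substituting \eqref{eq:19} for $\calB^j$ and $\calB^{N-1-j}$ reduces the computation to four scalar sums of the form $\sum_{j=0}^{N-1} w_{N-1-j-a}(q)\, w_{j-b}(q)$ with $a,b \in \{0,1\}$. Writing $q = 2\cos\theta$ with $\theta = k_0\pi/N$ and $w_m(q) = \sin((m+1)\theta)/\sin\theta$, the product-to-sum identity converts each sum into a sum of cosines; the $j$-dependent cosine is a geometric series whose ratio $e^{\pm 2i\theta}$ is a non-trivial $N$-th root of unity, so this part telescopes to zero, leaving only a constant term evaluated via $\cos(N\theta) = (-1)^{k_0}$ and $\cos((N \pm 1)\theta) = (-1)^{k_0} q/2$. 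Together with $\sin^2\theta = (4 - q^2)/4$ and $w_N(q) = (-1)^{k_0}$, this assembles into the rank-two matrix displayed in \eqref{eq:27}. I expect the main obstacle to be the sign and index bookkeeping in this last step: the four sums inherit factors $(-1)^{N-1-j}$ and $(-1)^j$ from \eqref{eq:19} which must combine with the overall $(-1)^{N-1}$ prefactor to recover exactly the constant $(-1)^{N-1} N w_N(q)/(4 - q^2)$ stated in the theorem.
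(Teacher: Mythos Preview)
Your argument is correct and follows the same overall strategy as the paper: linearize $(\tilde{B}_n^\lambda)^N$ around the unperturbed power $\calB^N=\gamma\Id$, read off the limit from the first-order term, and control the total variation via a Lipschitz bound on the remainder. The execution, however, is different. The paper never writes the non-commutative expansion $M=\sum_{j=0}^{N-1}\calB^{N-1-j}E_0\calB^j$; instead it uses the scalar closed form
\[
(\tilde{B}_n^\lambda)^N=(-1)^N
\begin{pmatrix}
-w_{N-2}(x_n) & -w_{N-1}(x_n)\\
 w_{N-1}(x_n) &  w_N(x_n)
\end{pmatrix},\qquad x_n=q-\lambda/a_n,
\]
so that each entry of $\tilde{C}_n^\lambda$ is $a_n\bigl(w_j(x_n)-w_j(q)\bigr)$, handled by two applications of the mean value theorem (one for the limit, one for the variation). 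The limiting matrix then involves the derivatives $w_{N-2}'(q)$, $w_{N-1}'(q)$, $w_N'(q)$, which the paper evaluates by quoting the identities $w_n'-w_{n-2}'=nw_{n-1}$ and $w_n'(x)=\bigl((n+2)w_{n-1}(x)-n\,w_{n+1}(x)\bigr)/(4-x^2)$ from \cite{MH2003}. Your route replaces these citations by the product-to-sum computation and the vanishing of the geometric sum $\sum_{j=0}^{N-1}e^{\pm 2ij\theta}$; this is more self-contained and makes the sign/index bookkeeping you flagged entirely manageable (note the $(-1)^{N-1-j}(-1)^j=(-1)^{N-1}$ factors collapse uniformly). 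The paper's route is shorter once the Chebyshev derivative identities are accepted.
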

\begin{proof}
	First, by \eqref{eq:20}, we get
	\[
		(\tilde{B}_n^\lambda)^N = (-1)^N
		\begin{pmatrix}
			-w_{N-2}(x_n) & -w_{N-1}(x_n) \\
			w_{N-1}(x_n) & w_N(x_n)
		\end{pmatrix},
	\]
	where $x_n = q - \lambda/a_n$. Using \eqref{eq:2}, \eqref{eq:1} and \eqref{eq:25}, we can calculate
	\[
		w_{N-2}(q) = (-1)^{k_0+1}, \qquad w_{N-1}(q) = 0, \qquad w_N(q) = (-1)^{k_0}.
	\]
	Therefore, by \eqref{fakt:4:1}, we get
	\[
		\lim_{n \to \infty} (\tilde{B}_n^\lambda)^N = \gamma \Id
	\]
	with $\gamma$ given by \eqref{eq:23}. We now write
	\begin{align*}
		\tilde{C}_n^\lambda
		& =
		(-1)^N
		a_n
		\begin{pmatrix}
			-w_{N-2}(x_n) + w_{N-2}(q) & -w_{N-1}(x_n) + w_{N-1}(q) \\
			w_{N-1}(x_n) - w_{N-1}(q) & w_N(x_n) - w_N(q)
		\end{pmatrix},
	\end{align*}
	which by the mean value theorem implies that the sequence $(\tilde{C}_n^\lambda : n \in \NN)$ converges to the matrix
	\begin{equation}
		\label{eq:43}
		(-1)^N \lambda
		\begin{pmatrix}
			w'_{N-2}(q) & w'_{N-1}(q) \\
			-w'_{N-1}(q) & -w'_N(q)
		\end{pmatrix}.
	\end{equation}
	Moreover, the second application of the mean value theorem gives
	\[
		\bigg|
		\frac{w_{N-2}(q) - w_{N-2}(x_{n+1})}{q - x_{n+1}}
		-
		\frac{w_{N-2}(q) - w_{N-2}(x_n)}{q - x_n}
		\bigg|
		\leq
		c \abs{\lambda} \bigg| \frac{1}{a_{n+1}} - \frac{1}{a_n} \bigg|,
	\]
	for some $c > 0$. Therefore, \eqref{eq:26} follows from \eqref{fakt:4:2}.
	
	It remains to express the matrix \eqref{eq:43} in the form~\eqref{eq:27}. Combining
	\cite[Section~1.2.2, (1.7)]{MH2003} with \cite[Section~2.4.5, (2.48)]{MH2003}, we obtain the following identity
	\[
		w_n' - w_{n-2}' = n w_{n-1},
	\]
	thus
	\[
		w_N'(q) = w_{N-2}'(q).
	\]
	Next, we have (see \cite[Section~2.5, Problem~15]{MH2003})
	\[
		w_n'(x) = \frac{(n+2) w_{n-1}(x) - n w_{n+1}(x)}{4 - x^2},
	\]
	for $x \in (-2, 2)$. Therefore, we get
	\[
		w_{N-1}'(q) = \frac{-2N w_N(q)}{4 - q^2}, \qquad w_N'(q) = \frac{-N w_{N+1}(q)}{4 - q^2}.
	\]
	Finally, by the recurrence formula \eqref{eq:20}, $w_{N+1}(q) = q w_N(q)$. Hence,
	\[
		w_N'(q) = \frac{-N q w_{N}(q)}{4 - q^2},
	\]
	which finished the proof.
\end{proof}
We next investigate for which set $\Lambda \subset \RR$ the family $\{Q^\lambda : \lambda \in \Lambda\}$
is almost uniformly non-degenerated. 
\begin{fakt} 
	\label{fakt:3}
	Let $q$ and $\gamma$ be defined by \eqref{eq:25} and \eqref{eq:23}, respectively. Suppose that
	\begin{enumerate}[(a)]
		\item $\begin{aligned}[b]
		\lim_{n \to \infty} a_n = \infty;
		\end{aligned}$
		\item $\begin{aligned}[b]
		\calV_N\left( a_n - a_{n-1} : n \in \NN \right) + \calV_N\left( \frac{1}{a_n} : n \in \NN\right) < \infty;
		\end{aligned}$ \label{fakt:3:1}
		\item $\begin{aligned}[b]
		\calV_N\big(b_n - q a_n : n \in \NN \big) < \infty;
		\end{aligned}$ \label{fakt:3:2}
		\item $\begin{aligned}[b]
		\lim_{n \to \infty} (b_n - q a_n) = 0;
		\end{aligned}$
		\item $\begin{aligned}[b]
		\lim_{k \to \infty} (a_{k N + j} - a_{k N + j - 1}) = s_j
		\end{aligned}$ for $j \in \{0, \ldots, N-1\}$.
	\end{enumerate}
	Let
	\begin{equation}
		\label{eq:33}
		C_n^\lambda = a_{n + N - 1} \big(X_n^\lambda - \gamma \Id\big).
	\end{equation}
	Then for every compact interval $I$,
	\begin{equation}
		\label{eq:32}
		\sup_{\lambda \in I} {\calV_N(C_n^\lambda : n \in \NN)} < \infty.
	\end{equation}
	Moreover, the family $\big\{Q^\lambda : \lambda \in \RR \setminus [\lambda_-, \lambda_+]\big\}$ is
	almost uniformly non-degenerated, where $\lambda_- \leq \lambda_+$ are the roots of the equation
	\begin{equation}
		\label{eq:42}
 		0 = \lambda^2 \frac{N^2}{4 - q^2} - \lambda \frac{N q S}{4 - q^2}  \\
		+ S \sum_{j=1}^{N-1} s_j u_{j-1}^2 - \sum_{i,j=1}^{N-1} s_i s_j u_{i-1} u_{j-1} u_{i-j}
		- \frac{1}{4} S^2,
	\end{equation}
   where $u_j = w_j(q)$ and
   \[
      S = \lim_{n \rightarrow \infty} (a_{n+N} - a_n) = \sum_{j=0}^{N-1} s_j.
   \]
\end{fakt}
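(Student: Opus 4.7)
The plan is to verify the hypotheses of Proposition~\ref{fakt:1} for the family $\{Q^\lambda : \lambda \in \RR \setminus [\lambda_-, \lambda_+]\}$, along the way establishing \eqref{eq:32}. The strategy generalises Proposition~\ref{fakt:4} to the present setting with $j$-dependent increments $s_j$ and perturbations in the $b_n$. The key step is a first-order expansion of $X_n^\lambda$ in $1/a_n$ around $\gamma \Id$. Write
\[
	B_n^\lambda = \calB + \frac{1}{a_n} D_n^\lambda, \quad
	\calB = \begin{pmatrix} 0 & 1 \\ -1 & -q \end{pmatrix}, \quad
	D_n^\lambda = \begin{pmatrix} 0 & 0 \\ a_n - a_{n-1} & \lambda - (b_n - q a_n) \end{pmatrix}.
\]
By hypotheses (a)--(c) and \eqref{eq:28}, the sequence $D_n^\lambda$ is bounded and has finite $\calV_N$, uniformly on compact $\lambda$-intervals. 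Since $\calB^N = \gamma \Id$ by Remark~\ref{rem:1}, expanding $X_n^\lambda = \prod_{j=n}^{n+N-1} B_j^\lambda$ and grouping by the number of perturbation factors yields
\[
	C_n^\lambda = \sum_{k=0}^{N-1} \frac{a_{n+N-1}}{a_{n+k}} \calB^{N-1-k} D_{n+k}^\lambda \calB^k + R_n^\lambda,
\]
where every term of $R_n^\lambda$ contains at least two factors $1/a_j$.

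For \eqref{eq:32}, I apply the product rule \eqref{eq:29} to each summand. The constant matrices $\calB^k$ are harmless; the $D_{n+k}^\lambda$ are bounded with finite $\calV_N$; and the ratios $a_{n+N-1}/a_{n+k} - 1 = (a_{n+N-1} - a_{n+k})/a_{n+k}$ have finite $\calV_N$ because the numerator is a telescoping sum of the bounded finite-$\calV_N$ sequence $a_{n+i} - a_{n+i-1}$ and the denominator is bounded with finite $\calV_N$ by \eqref{fakt:3:1}. Each term of $R_n^\lambda$ additionally carries a factor $1/a_j$ tending to zero, so the same product rule bounds its $\calV_N$. Along the subsequence $n = mN + j_0$, hypotheses (d) and (e) give $D_{n+k}^\lambda \to E_{(j_0+k) \bmod N}^\lambda$ with $E_j^\lambda = \bigl(\begin{smallmatrix} 0 & 0 \\ s_j & \lambda \end{smallmatrix}\bigr)$, and the ratios tend to $1$. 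Thus
\[
	\calC_{j_0}^\lambda := \lim_{m \to \infty} C_{mN+j_0}^\lambda
	= \sum_{k=0}^{N-1} \calB^{N-1-k} E_{(j_0+k) \bmod N}^\lambda \calB^k.
\]
An index shift together with $\calB^N = \gamma \Id$ gives $\calC_{j_0-1}^\lambda = \calB^{-1} \calC_{j_0}^\lambda \calB$, so $\discr(\calC_{j_0}^\lambda)$ is independent of $j_0$, and by Proposition~\ref{fakt:1} the problem reduces to determining for which $\lambda$ one has $\discr(\calC_0^\lambda) < 0$.

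Writing $\calC_0^\lambda = \lambda F + G$ with
\[
	F = \sum_{k=0}^{N-1} \calB^{N-1-k} f \calB^k, \quad
	G = \sum_{k=0}^{N-1} s_k \calB^{N-1-k} e \calB^k, \quad
	f = \begin{pmatrix} 0 & 0 \\ 0 & 1 \end{pmatrix}, \;
	e = \begin{pmatrix} 0 & 0 \\ 1 & 0 \end{pmatrix},
\]
presents $\discr(\calC_0^\lambda) = (\tr \calC_0^\lambda)^2 - 4 \det(\lambda F + G)$ as a quadratic polynomial in $\lambda$. Trace cyclicity and $\calB^{N-1} = \gamma \calB^{-1}$ collapse the trace to $\tr \calC_0^\lambda = -\gamma S$, contributing $S^2$ to the constant term. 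The coefficients $\det F$, the mixed bilinear term, and $\det G$ are then evaluated using the explicit expression
\[
	\calB^k = (-1)^k
	\begin{pmatrix} -w_{k-2}(q) & -w_{k-1}(q) \\ w_{k-1}(q) & w_k(q) \end{pmatrix}
\]
from Remark~\ref{rem:1}, together with the Chebyshev derivative identities used in the proof of Proposition~\ref{fakt:4} (which produce the factors $N^2/(4-q^2)$ and $N q S/(4-q^2)$). Matching the result against \eqref{eq:42} then identifies $\lambda_\pm$ as the two real roots of the resulting quadratic. The main obstacle will be the combinatorial bookkeeping leading to the quadratic form $\sum_{i,j=1}^{N-1} s_i s_j v_{i-1} v_{j-1} v_{i-j}$ inside $\det G$: its emergence requires expanding $\det(\sum_k s_k \calB^{N-1-k} e \calB^k)$ and repeatedly invoking product-to-sum identities for $w_n$ to fold the resulting double sum into the stated form.
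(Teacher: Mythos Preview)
Your argument is correct and close in spirit to the paper's, but your decomposition is organised differently. The paper does not expand $B_{n+j}^\lambda$ around the constant limit $\calB$; instead it compares each $B_{n+j}^\lambda$ to the $n$-dependent matrix $\tilde{B}_n^\lambda = \bigl(\begin{smallmatrix}0 & 1\\ -1 & \lambda/a_n - q\end{smallmatrix}\bigr)$ via the exact telescoping identity
\[
	X_n^\lambda = (\tilde{B}_n^\lambda)^N + \sum_{j=0}^{N-1} (\tilde{B}_n^\lambda)^{N-1-j}\bigl(B_{n+j}^\lambda - \tilde{B}_n^\lambda\bigr) B_{n+j-1}^\lambda\cdots B_n^\lambda,
\]
so that $C_n^\lambda$ is a finite sum of factors whose $\calV_N$ is controlled directly, with the last piece $\tfrac{a_{n+N-1}}{a_n}\tilde{C}_n^\lambda$ handled by Proposition~\ref{fakt:4}. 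Your multinomial expansion around $\calB$ is more direct---it makes Proposition~\ref{fakt:4} unnecessary as a separate lemma and yields the limit $\calC_{j_0}^\lambda$ in one step---at the price of having to bound the higher-order remainder $R_n^\lambda$ term by term (which, as you note, is routine via \eqref{eq:29}). The paper's route avoids any remainder estimate because its telescoping is exact, but pays for this by needing the preparatory Proposition~\ref{fakt:4} to compute $\lim \tilde{C}_n^\lambda$; in the end the $\lambda$-part of your $\calC_0^\lambda$ coincides with the paper's $\calC$, and both proofs converge on the same explicit discriminant calculation using the Chebyshev identities. Your sketch of that final calculation is accurate: the paper carries it out in full, obtaining $\tr(\gamma^{-1}\calC_0^\lambda)=-S$ and then expanding $\det(\gamma^{-1}\calC_0^\lambda)$ into the $\lambda^2$, $\lambda$, and constant pieces via repeated use of the identity $u_{i-1}u_{j-1}-u_i u_{j-2}=u_{i-j}$.
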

\begin{proof}
	We begin by deriving some consequences of \eqref{fakt:3:1} and \eqref{fakt:3:2}. Let us observe that in view of
	\eqref{eq:28}, the sequence
	$(a_{n+1} - a_n : n \in \NN)$ is bounded. Since
	\[
		\frac{a_{n+1}}{a_{n}} = 1 + (a_{n+1} - a_{n}) \frac{1}{a_n},
	\]
	we have
	\[
		\lim_{n \to \infty} \frac{a_{n+1}}{a_n} = 1.
	\]
	Moreover, by \eqref{eq:29}, we can estimate
	\begin{align}
		\nonumber
		\calV_N\bigg(\frac{a_{n+1}}{a_n} : n \in \NN\bigg) 
		& = \calV_N\bigg(\frac{a_{n+1}}{a_n} - 1: n \in \NN\bigg) \\
		\nonumber
		& = \calV_N\bigg((a_{n + 1} - a_n) \frac{1}{a_n} : n \in \NN\bigg) \\
		\label{eq:34}
		& \leq
		c
		\calV_N\big(a_{n+1} - a_n : n \in \NN \big) + c\calV_N\bigg(\frac{1}{a_n} : n \in \NN\bigg)
	\end{align}
	for some $c > 0$. Similarly, there is $c > 0$ such that
	\begin{equation}
		\label{eq:31}
		\calV_N\bigg(\frac{a_{n+j}}{a_n} : n \in \NN\bigg)
		\leq
		c
		\calV_N\bigg(\frac{a_{n+1}}{a_n} : n \in \NN\bigg)
	\end{equation}
	for each $j \in \{1, \ldots, N-1\}$. Finally, we can estimate
	\begin{align}
		\nonumber
		\calV_N\bigg(\frac{b_n}{a_n} : n \in \NN\bigg)
		& =
		\calV_N\bigg(\frac{b_n}{a_n} - q : n \in \NN\bigg) \\
		\label{eq:35}
		& \leq
		c
		\calV_N\bigg(\frac{1}{a_n} : n \in \NN \bigg)
		+
		c
		\calV_N\big(b_n - q a_n : n \in \NN\big)
	\end{align}
	for some $c > 0$.

	We now proceed to the proof of \eqref{eq:32}. Fix a compact interval $I$. Let
	\[
		\tilde{B}_n^\lambda = 
		\begin{pmatrix}
			0 & 1 \\
			-1 & \frac{\lambda}{a_n} - q
		\end{pmatrix},
	\]
	and $\tilde{C}^\lambda_n = a_n \big((\tilde{B}^\lambda_n)^N - \gamma \Id\big)$. Let us observe that
	\begin{align*}
		X_n^\lambda = B_{n+N-1}^\lambda B_{n + N - 2}^\lambda \cdots B_n^\lambda
		& =
		\big(B_{n+N-1}^\lambda - \tilde{B}_n^\lambda\big) B_{n+N-2}^\lambda \cdots B_n^\lambda \\
		& \phantom{=} 
		+ \tilde{B}_n^\lambda \big(B_{n+N-2}^\lambda - \tilde{B}_n^\lambda\big) \cdots B_n^\lambda\\
		& \phantom{=}
		+ \cdots + (\tilde{B}_n^\lambda)^{N-1} \big(B_n^\lambda - \tilde{B}_n^\lambda\big) \\
		& \phantom{=}
		+ (\tilde{B}_n^\lambda)^N.
	\end{align*}
	Hence,
	\begin{equation}
		\label{eq:37}
		\begin{aligned}
		C_n^\lambda
		& = a_{n + N - 1} \big(B_{n+N-1}^\lambda - \tilde{B}_n^\lambda\big)
		B_{n+N-2}^\lambda \cdots B_n^\lambda \\
		& \phantom{=}+ \tilde{B}_n^\lambda 
		a_{n+N-1} \big(B_{n+N-2}^\lambda - \tilde{B}_n^\lambda\big) \cdots B_n^\lambda \\
		& \phantom{=}+ \cdots +
		(\tilde{B}_n^\lambda)^{N-1} a_{n+N-1} \big(B_n^\lambda - \tilde{B}_n^\lambda\big) \\
		& \phantom{=} + \frac{a_{n+N-1}}{a_n}\tilde{C}^\lambda_n.
		\end{aligned}
	\end{equation}
	As a consequence of \eqref{eq:29}, to estimate $\calV_N(C_n^\lambda : n \in \NN)$, it is enough to show that each factor
	in \eqref{eq:37} has the supremum and the total $N$-variation bounded uniformly with respect to $\lambda \in I$.
	Since each factor as a function of $\lambda$ is continuous, in view of \eqref{eq:28}, it is enough to prove that
	there is $c > 0$ such that for all $\lambda \in I$
	\begin{gather}
		\label{eq:36a}
		\calV_N\big(B_n^\lambda : n \in \NN) \leq c, \\
		\label{eq:36b}
		\calV_N\big(\tilde{B}_n^\lambda : n \in \NN) \leq c,\\
		\label{eq:36c}
		\calV_N\bigg(\frac{a_{n+N-1}}{a_n} \tilde{C}^\lambda_n : n \in \NN\bigg) \leq c
	\end{gather}
	and for all $j \in \{0, \ldots, N-1\}$
	\begin{equation}
		\label{eq:36d}
        \calV_N\Big(a_{n+N-1} \big(B_{n+j}^\lambda - \tilde{B}_n^\lambda\big) : n \in \NN\Big) \leq c.
	\end{equation}
	We have	
	\[
		\big\lVert B_{n+N}^\lambda - B_n^\lambda \big\rVert
		\leq
		\bigg|\frac{a_{n+N-1}}{a_{n+N}} - \frac{a_{n-1}}{a_n} \bigg|
		+
		\abs{\lambda}\bigg|\frac{1}{a_{n+N}} - \frac{1}{a_n} \bigg|
		+
		\bigg|\frac{b_{n+N}}{a_{n+N}} - \frac{b_n}{a_n} \bigg|,
	\]
	thus by \eqref{eq:34}, \eqref{eq:35} and \eqref{fakt:3:1} we get \eqref{eq:36a}. Similarly, for \eqref{eq:36b} we
	obtain
	\[
		\big\lVert \tilde{B}_{n+N}^\lambda - \tilde{B}_n^\lambda \big\rVert
		\leq
		\abs{\lambda} \bigg|\frac{1}{a_{n+N}} - \frac{1}{a_n} \bigg|.
	\]
	The estimate \eqref{eq:36c} is a consequence of Proposition \ref{fakt:3}, \eqref{eq:34} and \eqref{eq:29}.
	Finally, the matrix $a_{n+N-1}\big(B_{n+j}^\lambda - \tilde{B}_n^\lambda\big)$ is equal to
	\begin{equation}
		\label{eq:38}
		\frac{a_{n+N-1}}{a_{n+j}}
		\begin{pmatrix}
			0 & 0 \\
			a_{n+j} - a_{n+j-1} & \lambda \left( 1 - \frac{a_{n+j}}{a_n} \right) + q a_{n+j} - b_{n+j}
		\end{pmatrix},
	\end{equation}
	hence by \eqref{eq:31}, \eqref{fakt:3:1} and \eqref{fakt:3:2} we obtain \eqref{eq:36d}.

	Next, we turn to proving uniform non-degeneracy of the family $\big\{Q^\lambda : \lambda \in I\big\}$. For
	$j \in \{0, \ldots, N-1\}$ we set
	\[
		\calC^\lambda_j = \lim_{k \to \infty} a_{kN+j} \big(X^\lambda_{kN+j} - \gamma \Id\big).
	\]
	First, we show that the convergence is uniform with respect to $\lambda \in I$. To do this, it is enough
	to analyse each factor in \eqref{eq:37} separately. Since
	\[
		\bigg|
		\frac{\lambda - b_n}{a_n} - q
		\bigg|
		\leq
		\frac{1}{a_n} \big(\abs{\lambda} + \big|b_n - q a_n\big|\big),
	\]
	both sequences $(\tilde{B}^\lambda_n : n \in \NN)$ and $(B^\lambda_n : n \in \NN)$ converge to
	\[
		\calB = \begin{pmatrix}
			0 & 1 \\
			-1 & -q
		\end{pmatrix}
	\]
	uniformly with respect to $\lambda \in I$. Moreover, by \eqref{eq:38}, for $i \in \{0, \ldots, N-1\}$ we have
	\[
		\lim_{k \to \infty}
		a_{(k+1) N + i - 1} \big(B^\lambda_{kN + j + i} - \tilde{B}^\lambda_{kN+i}\big)
		=
		\begin{pmatrix}
			0 &  0 \\
			s_{j+i \bmod N} & 0
		\end{pmatrix}
	\]
	uniformly with respect to $\lambda \in I$. As a consequence
	\begin{equation}
		\label{eq:44}
		\calC_j^\lambda = 
		\sum_{i = 0}^{N-1} 
		\calB^{N-1-i}
		\begin{pmatrix}
			0 & 0 \\
			s_{i + j \bmod N} & 0
		\end{pmatrix}
		\calB^i
		+
		\calC,
	\end{equation}
	where, by Proposition \ref{fakt:4},
	\[
		\calC = \lim_{n \to \infty} \tilde{C}^\lambda_n
	\]
	uniformly with respect to $\lambda \in I$. In light of Proposition \ref{fakt:1}, it is enough to show that
	$\discr(\calC^\lambda_j) < 0$. Since $\calB \calC \calB^{-1} = \calC$ and
	$\calB^N = \gamma \Id$, we have
	\begin{equation}
		\label{eq:41}
		\calC^\lambda_{j+1} = \calB \calC^\lambda_j \calB^{-1},
	\end{equation}
	which reduces our task to $j = 0$. Let us observe that by \eqref{eq:19}
	\[
		\calB^{N-1} 
		\begin{pmatrix}
			0 & 0\\
			s_j & 0\\
		\end{pmatrix}
		=
		(-1)^{N-1} w_N(q)
		\begin{pmatrix}
			s_j & 0 \\
			0 & 0
		\end{pmatrix}.
	\]
	Thus, we obtain 
	\[
		\calC^\lambda_0  = 
		\sum_{j=0}^{N-1} \calB^{-j} 
		\calB^{N-1} 
		\begin{pmatrix}
		0 & 0 \\
		s_j & 0
		\end{pmatrix}
		\calB^j + \calC =
		(-1)^{N-1} w_N(q)
		\sum_{j = 0}^{N-1} \calB^{-j} 
		\begin{pmatrix}
			s_j & 0 \\
			0 & 0
		\end{pmatrix}
		\calB^j + \calC.
	\]
	Since, by \eqref{eq:2} and the trigonometric identity for product of sines
	\begin{equation}
		\label{eq:39}
		u_{i-1} u_{j-1} - u_i u_{j-2} = u_{i-j}
	\end{equation}
	we can compute
	\[
		\calB^{-j} = (-1)^j
		\begin{pmatrix}
			u_j & u_{j-1} \\
			-u_{j-1} & -u_{j-2}
		\end{pmatrix}.
	\]
	Therefore, by \eqref{eq:23}, we have
	\[
		\calC^\lambda_0
		=
		-
		\gamma
		\begin{pmatrix}
			s_0 & 0 \\
			0 & 0
		\end{pmatrix}
		-
		\gamma
		\sum_{j = 1}^{N-1}
		s_j 
		\begin{pmatrix}
			-u_j u_{j-2} & -u_j u_{j-1} \\
			u_{j-1} u_{j-2} & u_{j-1}^2
		\end{pmatrix}
		-
		\gamma \frac{\lambda N}{4 - q^2}
		\begin{pmatrix}
			q & 2 \\
			-2 & -q
		\end{pmatrix}.	
	\]
	We now calculate discriminant of $\gamma^{-1} \calC_0^\lambda$. The trace of $\gamma^{-1} \calC_0^\lambda$ is equal to
	\begin{align*}
		\tr (\gamma^{-1} \calC^\lambda_0)
		& =
		-s_0 - \sum_{j = 1}^{N-1} s_j \big(u_{j-1}^2 - u_j u_{j-2}\big),
		 =
		-S
	\end{align*}
	where in the last equality we have used \eqref{eq:39}. We next compute the determinant of
	$\gamma^{-1} \calC_0^\lambda$. Firstly, the terms containing $\lambda^2$ give
	\[
		-\bigg( \frac{\lambda N q}{4 - q^2} \bigg)^2 + \bigg( \frac{2 \lambda N}{4 - q^2} \bigg)^2 
		= \frac{\lambda^2 N^2}{4 - q^2}.
	\]
	Secondly, we find the terms containing $\lambda$
	\[
		\frac{\lambda N}{4 - q^2}
		\Big( -q s_0 + 
		\sum_{j=1}^{N-1} s_j \Big(q \big(u_j u_{j-2} + u_{j-1}^2\big) 
		- 2 u_{j-1} \big(u_{j-2} + u_j \big)\Big) 
		\Big).
	\]
	By the recurrence formula for $w_{j-2}$ we get
	\[
   		u_{j-1} \big(u_{j-2} + u_j\big) = q u_{j-1}^2.
	\]
	We thus obtain
	\[
		\frac{\lambda N q}{4 - q^2} 
		\Big(
		-s_0 + \sum_{j=1}^{N-1} s_j \big(u_j u_{j-2} - u_{j-1}^2\big) 
		\Big) 
		=
		- 
		\frac{\lambda N q}{4 - q^2} S,
	\]
	where in the last equality we again applied \eqref{eq:39}. Lastly, we compute the free term
	\begin{equation} \label{eq:wyrazWolnyDet}
		\sum_{j=1}^{N-1} s_0 s_j u_{j-1}^2 
		+ 
		\sum_{i,j=1}^{N-1} s_i s_j u_i u_{j-1} \big(u_{i-1} u_{j-2} - u_{i-2} u_{j-1}\big).
	\end{equation}
   We have
   \[
      s_0 = S - \sum_{i=1}^{N-1} s_i.
   \]
   Consequently, \eqref{eq:wyrazWolnyDet} equals
   \[
		S \sum_{j=1}^{N-1} s_j u_{j-1}^2 
		+ \sum_{i,j=1}^{N-1} s_i s_j \big(u_i u_{i-1} u_{j-1} u_{j-2} - u_i u_{j-1}^2 u_{i-2} - u_{j-1}^2\big).
   \]
   Identity \eqref{eq:39} implies that
   \[
      -u_{j-1}^2 (1 + u_i u_{i-2}) = - u_{j-1}^2 u_{i-1}^2,
   \]
	which together with \eqref{eq:39} proves that the free term equals
   \[
      S \sum_{j=1}^{N-1} s_j u_{j-1}^2 - \sum_{i,j=1}^{N-1} s_i s_j u_{i-1} u_{j-1} u_{i-j}.
   \]
	Finally, we write
	\begin{align}
      \label{eq:45}
		\discr(\gamma^{-1} \calC^\lambda_0) & = - \lambda^2 \frac{4 N^2}{4 - q^2} + \lambda
		\frac{4 N q}{4 - q^2} S \\ \nonumber
		&\phantom{=} - 4 S \sum_{j=1}^{N-1} s_j u_{j-1}^2 + 4 \sum_{i,j=1}^{N-1} s_i s_j u_{i-1} u_{j-1} u_{i-j} + S^2.
	\end{align}
	Therefore, $\discr(\calC^\lambda_0) < 0$ if and only if $\lambda \notin [\lambda_-, \lambda_+]$, which
	completes the proof.
\end{proof}
      
We now present two examples where one can give formulas for $\lambda_-$ and $\lambda_+$. 
\begin{example}[Multiple weights]
	Let us fix a positive integer $N$ and
	\[
		q = 2 \cos \frac{k_0 \pi}{N}
	\]
	for some $k_0 \in \{1, \ldots, N-1\}$. Suppose $s_0 = s$ and $s_1 = \ldots = s_{N-1} = 0$. Then 
	\[
		\lambda_- = \frac{(q-2) s}{2 N}, \qquad \lambda_+ = \frac{(q+2) s}{2 N}.
	\]
\end{example}
      
\begin{example}[Additive perturbations]
	Let $N = 2K$ and $q = 0$. Then $w_{2 k} (0) = (-1)^k$ and $w_{2k+1}(0) = 0$ for $k \in \mathbb{Z}$. 
   Set $a = s_0 + s_2 + \ldots s_{N-2}$ and $b = s_1 + s_3 + \ldots s_{N-1}$. Therefore, the equation
	\eqref{eq:42} takes the form
	\[
      0 = \lambda^2 N^2 + 4(a+b)b - 4 b^2 - (a+b)^2.
	\]
	This equation can be written as
	\[
		0 = \lambda^2 N^2 -(a-b)^2.
	\]
	Therefore,
	\[
		\lambda_- = - \frac{\abs{a-b}}{N}, \qquad \lambda_+ = \frac{\abs{a-b}}{N}.
	\]
\end{example}

Finally, the following theorem shows the asymptotics of generalized eigenvectors in the critical case.
\begin{tw} 
	\label{tw:2}
	Let $q$ be defined by \eqref{eq:25} and $\lambda_- \leq \lambda_+$ be the roots of the equation \eqref{eq:42}. 
	Suppose that
	\begin{enumerate}[(a)]
		\item $\begin{aligned}[b]
			\lim_{n \to \infty} a_n = \infty;
		\end{aligned}$
		\item $\begin{aligned}[b]
			\calV_N\left( a_n - a_{n-1} : n \in \NN \right) + \calV_N\left( \frac{1}{a_n} : n \in \NN\right) < \infty;
		\end{aligned}$
 		\item $\begin{aligned}[b]
			\calV_N\big(b_n - q a_n : n \in \NN\big) < \infty;
		\end{aligned}$
		\item $\begin{aligned}[b]
			\lim_{n \to \infty} (b_n - q a_n) = 0;
		\end{aligned}$
		\item $\begin{aligned}[b]
			\lim_{k \to \infty} (a_{k N + j} - a_{k N  + j - 1}) = s_j
		\end{aligned}$
		for $j \in \{0, \ldots, N-1\}$.
	\end{enumerate}
   Then for every compact interval $I \subset \RR \setminus [\lambda_-, \lambda_+]$ there is $c \geq 1$ such that for 
   all $\lambda \in I$, and every generalized eigenvector $u$ corresponding to $\lambda$ and all $n \geq 1$
	\[
		c^{-1} a_n^{-1} (u_0^2 + u_1^2) \leq u_{n-1}^2+u_n^2 \leq c a_n^{-1} (u_0^2 + u_1^2).
	\]
\end{tw}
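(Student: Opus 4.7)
The strategy is to reduce the theorem to Theorem \ref{tw:3}, applied to the critical-case quadratic forms $Q_n^\lambda(v) = a_{n+N-1}\sprod{E X_n^\lambda v}{v}$ from Section~\ref{sec:3.2}. That theorem requires two ingredients: uniform non-degeneracy of $\{Q^\lambda : \lambda \in I\}$, and uniform two-sided bounds on $\abs{S_n(\alpha,\lambda)}$ on $\sS^1 \times I$.

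The first ingredient follows at once from Proposition~\ref{fakt:3}. Its hypotheses coincide with those of Theorem~\ref{tw:2}, and its conclusion is almost uniform non-degeneracy of $\{Q^\lambda : \lambda \in \RR \setminus [\lambda_-, \lambda_+]\}$. Compactness of $I$ then upgrades this to uniform non-degeneracy on $I$.

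For the second ingredient I would mimic the proof of Theorem~\ref{tw:1}, replacing $1$-step shifts by $N$-step shifts, since under the present hypotheses only $N$-variations are controlled. Using $\sprod{E v}{v} = 0$ one rewrites $S_n = a_{n+N-1}\sprod{E C_n^\lambda w_n}{w_n}$ with $w_n = (u_{n-1}, u_n)^t$ and $C_n^\lambda$ as in \eqref{eq:33}. Uniform non-degeneracy yields $c^{-1} a_{n+N-1} \norm{w_n}^2 \leq \abs{S_n} \leq c \, a_{n+N-1} \norm{w_n}^2$; by continuity, $S_n$ has constant sign for $n$ large on $\sS^1 \times I$, so $F_n = (S_{n+N} - S_n)/S_n$ is well-defined. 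If I can show that $\sum_n \abs{F_n}$ converges uniformly on $\sS^1 \times I$, then for each residue $j \in \{0, \ldots, N-1\}$ the infinite product $\prod_{k}(1 + F_{kN+j})$ converges uniformly to a non-zero continuous limit, giving uniform two-sided bounds on $\abs{S_{kN+j}}$ and hence on $\abs{S_n}$.

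The key step is thus the estimate on $\abs{F_n}$. Starting from $w_{n+N} = X_n^\lambda w_n$ and the decomposition $X_n^\lambda = \gamma \Id + a_{n+N-1}^{-1} C_n^\lambda$, one expands $(X_n^\lambda)^t E C_{n+N}^\lambda X_n^\lambda$. Dividing $S_{n+N} - S_n$ by $\abs{S_n}$ then produces contributions of two sorts: (i) terms summable by \eqref{eq:32} and the variation hypotheses, essentially controlled by $\norm{C_{n+N}^\lambda - C_n^\lambda}$; and (ii) pieces of apparent order $1/a_{n+N-1}$ coming from the factor $a_{n+2N-1} - a_{n+N-1} \to S$ in the leading difference, together with the cross-terms $E C_{n+N}^\lambda C_n^\lambda + (C_n^\lambda)^t E C_{n+N}^\lambda$ produced by the $a_{n+N-1}^{-1} C_n^\lambda$ correction. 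Under the present hypotheses $\sum 1/a_n$ need not converge, so the pieces in~(ii) are the main obstacle: they must be shown to cancel or combine, exploiting the antisymmetry of $E$ together with the $2 \times 2$ identity $A^t E A = \det(A)\, E$, so that only genuinely summable remainders expressible through $\calV_N(a_n - a_{n-1})$, $\calV_N(1/a_n)$ and $\calV_N(b_n - q a_n)$ survive. Carrying out this algebra carefully, in the spirit of the explicit matrix computation in the proof of Theorem~\ref{tw:1}, is the heart of the argument.
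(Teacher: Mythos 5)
Your reduction to Theorem~\ref{tw:3} via Proposition~\ref{fakt:3}, the identity $S_n = a_{n+N-1}\sprod{E C_n^\lambda w_n}{w_n}$ coming from $\sprod{Ev}{v}=0$, and the telescoping product over $F_n = (S_{n+N}-S_n)/S_n$ all match the paper's proof. The gap is in the one step you yourself call the heart of the argument: you do not actually carry out the cancellation of the $O(1/a_n)$ terms, and the route you propose does not obviously close. If you expand $S_{n+N}=a_{n+2N-1}\sprod{(X_n^\lambda)^t E C_{n+N}^\lambda X_n^\lambda w_n}{w_n}$ with $X_n^\lambda=\gamma\Id+a_{n+N-1}^{-1}C_n^\lambda$, then after replacing $C_{n+N}^\lambda$ by $C_n^\lambda$ up to summable errors and invoking $C^tEC=\det(C)E$, $\sprod{Ev}{v}=0$ and Cayley--Hamilton, the two dangerous pieces combine into
\[
	\frac{1}{a_{n+N-1}}\Big[(a_{n+2N-1}-a_{n+N-1})+\gamma\,\frac{a_{n+2N-1}}{a_{n+N-1}}\tr C_n^\lambda\Big]\cdot\frac{\sprod{EC_n^\lambda w_n}{w_n}}{\sprod{EC_n^\lambda w_n}{w_n}}.
\]
The bracket tends to $S+\gamma\tr\calC_0^\lambda=S-S=0$, but only with an uncontrolled rate: the hypotheses give $a_{n+N}-a_n\to S$ and $\tr C_n^\lambda\to-\gamma S$ without any summability of the deviations, and since $\sum 1/a_n$ may diverge, a term of the form $\epsilon_n/a_{n+N-1}$ with $\epsilon_n\to 0$ need not be summable. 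So the cancellation you are counting on is only a leading-order cancellation, not a summable one.

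The paper avoids this entirely by applying your identity $A^tEA=\det(A)E$ in the opposite direction: instead of pushing $S_{n+N}$ back to the vector $w_n$, it pulls $S_n$ forward to $w_{n+N}$. Writing $w_n=(X_n^\lambda)^{-1}w_{n+N}$ and using $((X_n^\lambda)^{-1})^tE=\frac{1}{\det X_n^\lambda}EX_n^\lambda$ together with $\det X_n^\lambda=a_{n-1}/a_{n+N-1}$ gives
\[
	S_n=\frac{a_{n+N-1}^2}{a_{n-1}}\,\sprod{EC_n^\lambda w_{n+N}}{w_{n+N}},
	\qquad
	S_{n+N}=a_{n+2N-1}\,\sprod{EC_{n+N}^\lambda w_{n+N}}{w_{n+N}},
\]
so that $S_{n+N}-S_n$ is a single quadratic form in $w_{n+N}$ and
\[
	\abs{F_n}\le c\,\frac{a_{n-1}}{a_{n+N-1}}
	\Big\lVert\frac{a_{n+2N-1}}{a_{n+N-1}}C_{n+N}^\lambda-\frac{a_{n+N-1}}{a_{n-1}}C_n^\lambda\Big\rVert.
\]
Here both prefactors are $N$-shifts of the single sequence $a_{n+N-1}/a_{n-1}$, whose total $N$-variation is finite by \eqref{eq:34} and \eqref{eq:31}, and $\calV_N(C_n^\lambda)$ is finite by \eqref{eq:32}; the product rule \eqref{eq:29} then gives uniform summability with no cancellation of $1/a_n$-order terms ever required. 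To repair your argument you should adopt this re-expression of $S_n$ (or find a genuinely summable bound for the residual bracket above, which the stated hypotheses do not seem to provide).
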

\begin{proof}
	The method of the proof is similar to that of Theorem \ref{tw:1}. We fix a compact interval inside
	$\RR \setminus [\lambda_-, \lambda_+]$. We consider the sequence of functions $(S_n : n \in \NN)$ defined
	by \eqref{eq:6}. By Theorem \ref{tw:3}, it is enough to show that there is $c \geq 1$ such that
	\[
		c^{-1} \leq \abs{S_n(\alpha, \lambda)} \leq c
	\]
	for all $\alpha \in \sS^1$ and $\lambda \in I$. By Proposition \ref{fakt:3}, the family
	$\big\{Q^\lambda : \lambda \in I \big\}$ is uniformly non-degenerated.
	Again, it is enough to show that the series
	\[
		\sum_{n = M}^\infty \abs{F_n}
	\]
	converges uniformly on $\sS^1 \times I$, where $(F_n : n \geq M)$ is the sequence of functions on $\sS^1 \times I$ 
	defined by
	\[
		F_n = \frac{S_{n + N} - S_n}{S_n}.
	\]
	Indeed,
	\[
		\prod_{j = 0}^{k-1} (1 + F_{jN+M}) = \prod_{j = 0}^{k-1} \frac{S_{(j+1) N+M}}{S_{j N+M}} 
		= \frac{S_{kN+M}}{S_M}.
	\]
	Using \eqref{eq:33}, for a generalized eigenvector $u$ corresponding to $\lambda \in I$ with
	$(u_0, u_1) = \alpha \in \sS^1$, we can write
	\begin{align} \nonumber
		S_{n+N}(\alpha, \lambda) & = 
		a_{n+2N-1}^2 \bigg\langle E X_{n+N}^\lambda
		\begin{pmatrix}
			u_{n+N-1} \\
			u_{n+N}
		\end{pmatrix},
		\begin{pmatrix}
			u_{n+N-1} \\
			u_{n+N}
	\end{pmatrix}
		\bigg\rangle \\
		& = a_{n+2N-1}
		\bigg\langle
		E C_{n+N}^\lambda
		\begin{pmatrix}
			u_{n+N-1} \\
			u_{n+N}
		\end{pmatrix},
		\begin{pmatrix}
			u_{n+N-1} \\
			u_{n+N}
		\end{pmatrix}
		\bigg\rangle.
	\end{align}
	Moreover,
	\begin{align*}
   		S_n(\alpha, \lambda) & = a_{n+N-1}^2
		\bigg\langle 
		E 
		\begin{pmatrix}
			u_{n+N-1} \\
			u_{n+N}
		\end{pmatrix},
		(X_n^\lambda)^{-1}
		\begin{pmatrix}
			u_{n+N-1} \\
			u_{n+N}
		\end{pmatrix}
		\bigg\rangle \\
		& =
		a_{n+N-1}^2 
		\bigg\langle
		\big((X_n^\lambda)^{-1}\big)^t E
		\begin{pmatrix}
			u_{n+N-1} \\
			u_{n+N}
		\end{pmatrix},
		\begin{pmatrix}
			u_{n+N-1} \\
			u_{n+N}
		\end{pmatrix}
		\bigg\rangle.
	\end{align*}
	Observe that for every invertible matrix $X \in M_{2}(\mathbb{R})$ the following formula holds true
	\[
      \frac{1}{\det X} E X = (X^{-1})^t E.
	\]
	Therefore,
	\begin{align} \nonumber
		S_n(\alpha, \lambda) & = \frac{a_{n+N-1}^3}{a_{n-1}} 
		\bigg\langle
   		E X_n^\lambda
		\begin{pmatrix}
			u_{n+N-1} \\
			u_{n+N}
		\end{pmatrix},
		\begin{pmatrix}
			u_{n+N-1} \\
			u_{n+N}
		\end{pmatrix}
		\bigg\rangle \\
		& =
		\frac{a_{n+N-1}^2}{a_{n-1}}
		\bigg\langle
		E C_n^\lambda
		\begin{pmatrix}
			u_{n+N-1} \\
			u_{n+N}
		\end{pmatrix},
		\begin{pmatrix}
			u_{n+N-1} \\
			u_{n+N}
		\end{pmatrix}
		\bigg\rangle.
	\end{align}
	In particular, since $\big\{Q^\lambda : \lambda \in I\big\}$ is uniformly non-degenerated, we have
	\begin{equation}
		\abs{S_n(\alpha, \lambda)}
		\geq
		c^{-1}
		\frac{a_{n+N-1}^2}{a_{n-1}} 
		\big(u_{n+N-1}^2 + u_{n+N}^2\big)
	\end{equation}
	for all $n \geq M$, $\alpha \in \sS^1$ and $\lambda \in I$. Hence,
	\begin{equation}
		\big|F_n(\alpha, \lambda) \big|
		\leq
		c 
		\frac{a_{n-1}}{a_{n+N-1}} 
		\bigg\lVert \frac{a_{n+2N-1}}{a_{n+N-1}} C_{n+N}^\lambda - \frac{a_{n+N-1}}{a_{n-1}} C_n^\lambda \bigg\rVert,
	\end{equation}
	which, by \eqref{eq:32}, \eqref{eq:34} and \eqref{eq:29}, is summable uniformly with respect to $\alpha \in \sS^1$
	and $\lambda \in I$.
\end{proof}

\begin{corollary}
	\label{cor:3}
	Under the hypothesis of Theorem \ref{tw:2}, for each $j \in \{0, \ldots, N-1\}$ the subsequence of continuous
	functions $(S_{kN+j} : k \in \NN)$ converges almost uniformly on
	$\sS^1 \times \big(\RR \setminus [\lambda_-, \lambda_+]\big)$ to the function without zeros.
\end{corollary}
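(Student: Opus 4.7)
The plan is to extract convergence of the subsequence $(S_{kN+j}:k\in\NN)$ from the estimates already produced in the proof of Theorem~\ref{tw:2}, essentially by reading its telescoping identity the other way. Fix $j\in\{0,\ldots,N-1\}$ and a compact interval $I\subset\RR\setminus[\lambda_-,\lambda_+]$. The proof of Theorem~\ref{tw:2} establishes, via Proposition~\ref{fakt:3} together with \eqref{eq:32}, \eqref{eq:34} and \eqref{eq:29}, that there exists $M$ such that the series
\[
   \sum_{n=M}^{\infty} \abs{F_n(\alpha,\lambda)}, \qquad F_n=\frac{S_{n+N}-S_n}{S_n},
\]
converges uniformly on $\sS^1\times I$. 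In particular, the subseries obtained by selecting indices $n=kN+j$ with $kN+j\geq M$ also converges uniformly.

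Next, I would use the identity already exploited inside the proof of Theorem~\ref{tw:2}: for $k_0$ large enough that $k_0 N+j\geq M$, and every $k>k_0$,
\[
   S_{kN+j}(\alpha,\lambda) = S_{k_0 N+j}(\alpha,\lambda)\,\prod_{\ell=k_0}^{k-1}\bigl(1+F_{\ell N+j}(\alpha,\lambda)\bigr).
\]
Because $\sum_{\ell\geq k_0}\abs{F_{\ell N+j}}$ converges uniformly on $\sS^1\times I$ and each $F_{\ell N+j}$ is continuous, the standard convergence criterion for infinite products yields a continuous function $P_j:\sS^1\times I\to\RR$ such that $\prod_{\ell=k_0}^{k-1}(1+F_{\ell N+j})\to P_j$ uniformly on $\sS^1\times I$. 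Since $S_{k_0 N+j}$ is continuous on $\sS^1\times I$ (as observed at the beginning of the proof of Theorem~\ref{tw:1}, the generalized eigenvector depends continuously on $(\alpha,\lambda)$), we conclude that $S_{kN+j}\to S_{k_0 N+j}\cdot P_j$ uniformly on $\sS^1\times I$, and the limit is continuous.

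It remains to argue that the limit vanishes nowhere on $\sS^1\times I$. This is where the uniform lower bound plays the crucial role: the proof of Theorem~\ref{tw:2}, applied through Theorem~\ref{tw:3}, yields a constant $c\geq 1$ with $\abs{S_n(\alpha,\lambda)}\geq c^{-1}$ for all $n\geq M$, $(\alpha,\lambda)\in\sS^1\times I$. Passing to the limit along $n=kN+j$ shows $\abs{S_{k_0 N+j}\cdot P_j}\geq c^{-1}$ on $\sS^1\times I$, so the limit has no zeros. Since $I\subset\RR\setminus[\lambda_-,\lambda_+]$ was an arbitrary compact subinterval, this gives almost uniform convergence on $\sS^1\times(\RR\setminus[\lambda_-,\lambda_+])$ to a continuous zero-free function, as required.

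The argument is essentially bookkeeping: all the analytic work (summability of $F_n$, continuity of $S_n$, the two-sided bound on $\abs{S_n}$) was already done for Theorem~\ref{tw:2}. The only mild point of care is to notice that the factorization used in that proof to control $\abs{S_{kN+M}/S_M}$ in fact encodes the full convergence of each subsequence $(S_{kN+j})_{k}$, and to invoke the elementary fact that $\sum\abs{a_n}<\infty$ with $a_n$ continuous gives a continuous infinite product $\prod(1+a_n)$ — so I do not anticipate a genuine obstacle.
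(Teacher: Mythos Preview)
Your argument is correct and is exactly the intended one: the paper states Corollary~\ref{cor:3} without proof, as an immediate consequence of the estimates obtained inside the proof of Theorem~\ref{tw:2}, and you have simply written out that consequence. One small point of phrasing: the lower bound $\abs{S_n}\geq c^{-1}$ is not produced by Theorem~\ref{tw:3} (that theorem \emph{assumes} it); rather, it is what the proof of Theorem~\ref{tw:2} establishes via the uniform summability of $\sum\abs{F_n}$ and the non-degeneracy of $Q^\lambda_n$ --- indeed, your own product argument already gives the zero-free limit directly, since $\sum\abs{F_{\ell N+j}}<\infty$ and $1+F_{\ell N+j}=S_{(\ell+1)N+j}/S_{\ell N+j}\neq 0$ force $\prod(1+F_{\ell N+j})$ to be nonzero.
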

      
\begin{remark}
	Theorem~\ref{tw:1} could be proved in the similar way as Theorem~\ref{tw:2}. Then the assumption
	\[
		\calV_1\bigg(\frac{a_{n+N}}{a_n} : n \in \NN\bigg) < \infty
	\]
	has to be replaced by
	\[
		\calV_N\bigg(\frac{a_{n+1}}{a_n} : n \in \NN\bigg) < \infty,
	\]
	which is more symmetric and closer to the approach taken in \cite{MM2}. However, in this way, one could obtain only the
	convergence of a subsequence $(S_{k N + j} : k \in \NN)$ for each $j \in \{0, \ldots, N-1\}$.
\end{remark}

Let $a$ and $b$ be two sequences satisfying hypothesis of Theorem \ref{tw:2}. Observe that if $\tilde{a}$ and
$\tilde{b}$ are two sequences such that for some $d \in \ZZ$ and all $n \in \NN$
\[
	\tilde{a}_{n + d} = a_n, \qquad \tilde{b}_{n + d} = b_n,
\]
then $\tilde{a}$ and $\tilde{b}$ again satisfy hypothesis of Theorem \ref{tw:2}. The resulting sequence 
$(\tilde{s}_0, \ldots, \tilde{s}_{N-1})$ is a cyclic shift of $(s_0, \ldots, s_{N-1})$. Therefore, by
\eqref{eq:41} and \eqref{eq:45}, we easily obtain $\tilde{\lambda}_- = \lambda_-$ and $\tilde{\lambda}_+ = \lambda_+$.

Let us recall that with a Jacobi matrix $A$ one can associate the sequence of polynomials $(p_n : n \geq -1)$
defined by
\begin{align} \label{def:wielomianow}
   \begin{gathered} 
      p_{-1}(\lambda) = 0, \quad p_0(\lambda) = 1, \\
      a_n p_{n+1}(\lambda) = (\lambda - b_n) p_n(\lambda) - a_{n-1} p_{n-1}(\lambda), \quad n \geq 0
   \end{gathered}
\end{align}
for $\lambda \in \RR$.

\begin{fakt} \label{fakt:perturbacjaUn}
	Let $u$ be a generalized eigenvector of $A$ corresponding to $\lambda \in \RR$. Suppose that $u$ is
	not proportional to $(p_n(\lambda) : n \in \NN)$. Then there exist sequences $\tilde{a}$ and $\tilde{b}$ such that
	for all $n \in \NN$
	\begin{equation}
		\label{eq:perturbacjaUn}
        \tilde{a}_{n+2} = a_n, \quad \tilde{b}_{n+2} = b_n,
    \end{equation}
	and
	\[
		u_n = \tilde{p}_{n+2} (\lambda).
	\]
\end{fakt}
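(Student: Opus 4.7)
The plan is to construct $\tilde{a}$ and $\tilde{b}$ by prepending to $a, b$ two new initial terms $\tilde{a}_0, \tilde{a}_1 > 0$ and $\tilde{b}_0, \tilde{b}_1 \in \RR$ chosen so that the polynomial sequence $(\tilde{p}_n)$ associated with $(\tilde{a}, \tilde{b})$ via \eqref{def:wielomianow} satisfies $\tilde{p}_2(\lambda) = u_0$ and $\tilde{p}_3(\lambda) = u_1$. Once this is arranged, the relations $\tilde{a}_{n+2} = a_n$ and $\tilde{b}_{n+2} = b_n$ imply that for every $m \geq 3$ the $\tilde{p}$-recurrence at index $m$ coincides, under the substitution $m = n + 2$, with the generalized eigenvector recurrence \eqref{eq:3} for $u$ at index $n \geq 1$. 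A trivial induction starting from $\tilde{p}_2 = u_0,\ \tilde{p}_3 = u_1$ then produces $u_n = \tilde{p}_{n+2}(\lambda)$ for all $n \in \NN$.

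To realize this, I unfold \eqref{def:wielomianow} at $m = 0, 1, 2$ using $\tilde{p}_0 = 1,\ \tilde{p}_2 = u_0,\ \tilde{p}_3 = u_1,\ \tilde{a}_2 = a_0,\ \tilde{b}_2 = b_0$, obtaining
\[
	\tilde{a}_0 \tilde{p}_1 = \lambda - \tilde{b}_0, \qquad
	\tilde{a}_1 u_0 = (\lambda - \tilde{b}_1) \tilde{p}_1 - \tilde{a}_0, \qquad
	\tilde{a}_1 \tilde{p}_1 = (\lambda - b_0) u_0 - a_0 u_1,
\]
in the five unknowns $\tilde{a}_0, \tilde{a}_1, \tilde{b}_0, \tilde{b}_1, \tilde{p}_1$. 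Writing $D := (\lambda - b_0) u_0 - a_0 u_1$ for the right-hand side of the third equation, I next show that the hypothesis that $u$ is not proportional to $(p_n(\lambda))$ is equivalent to $D \neq 0$. If $D = 0$, then $u_0 \neq 0$ (otherwise also $u_1 = 0$, forcing $u \equiv 0$), so $u_1/u_0 = (\lambda - b_0)/a_0 = p_1(\lambda)/p_0(\lambda)$; since $u$ and $u_0 \cdot (p_n(\lambda))$ satisfy the same order-two recurrence for $n \geq 1$ with matching initial values, they agree everywhere. The reverse implication is a direct check.

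Given $D \neq 0$, the system is solved by exploiting its two degrees of freedom: pick any $\tilde{a}_0 > 0$ and choose $\tilde{b}_0 \in \RR$ so that $\lambda - \tilde{b}_0$ has the sign of $D$. The first equation then defines $\tilde{p}_1 \neq 0$ with the sign of $D$; the third forces $\tilde{a}_1 := D/\tilde{p}_1 > 0$; the second uniquely determines $\tilde{b}_1 \in \RR$. All three constraints hold by construction, so $\tilde{p}_2 = u_0$ and $\tilde{p}_3 = u_1$, and the induction from the first paragraph finishes the proof. The only step requiring any thought is the Wronskian-type equivalence between non-proportionality and $D \neq 0$; everything else is a parameter count followed by routine algebra.
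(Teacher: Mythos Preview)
Your proof is correct and follows essentially the same approach as the paper: both reduce the problem to solving the same three-equation system in $\tilde a_0,\tilde a_1,\tilde b_0,\tilde b_1,\tilde p_1$ after observing that $D$ (the paper's $\beta$) is nonzero precisely when $u$ is not proportional to $(p_n(\lambda))$. The only difference is in how the underdetermined system is parametrized: the paper fixes $\tilde b_0=\tilde b_1=\lambda-\gamma$ and must then impose the extra condition $1+(\beta/\gamma)u_0>0$ to keep $\tilde a_0>0$, whereas you take $\tilde a_0>0$ and $\tilde b_0$ as the free parameters (with a sign condition on $\lambda-\tilde b_0$) and read off $\tilde b_1$ at the end, which is marginally cleaner.
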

\begin{proof}
	Let
	\[
		\beta = (\lambda - b_0) u_0 - a_0 u_1.
    \]
    Because the sequence $u$ is not proportional to $(p_n(\lambda): n \in \NN)$ we have $\beta \neq 0$. In order to
	construct sequences $\tilde{a}$ and $\tilde{b}$ it suffices to solve the system
	\begin{equation}
		\label{eq:46}
		\left\{
		\begin{aligned}
			& \tilde{b}_0 + \tilde{a}_0 \tilde{p}_1(\lambda) = \lambda \\
			& \tilde{a}_0 + \tilde{b}_1 \tilde{p}_1(\lambda) + \tilde{a}_1 u_0 = \lambda \tilde{p}_1(\lambda) \\
			& \tilde{a}_1 \tilde{p}_1(\lambda) + b_0 u_0 + a_0 u_1 = \lambda u_0.
		\end{aligned}
		\right.
	\end{equation}
	Let $\gamma \neq \lambda,\ \gamma \beta > 0$ and
	\[
		1 + \frac{\beta}{\gamma} u_0 > 0.
	\]
	We take
    \[
    	\tilde{b}_0 = \tilde{b}_1 = \lambda - \gamma.
    \]
	Then \eqref{eq:46} may be rewritten as
	\[
		\left\{
		\begin{aligned}
			& \tilde{p}_1(\lambda) = \frac{\gamma}{\tilde{a}_0}\\
			& \gamma \tilde{p}_1(\lambda) = \tilde{a}_0 + \tilde{a}_1 u_0\\
			& \tilde{a}_1 \tilde{p}_1(\lambda) = \beta.
		\end{aligned}
		\right.
	\]
	From the first and the third equation, we get
	\[
		\tilde{a}_1 = \frac{\beta}{\gamma} \tilde{a}_0.
	\]
	Now, from the second equation, we obtain
	\[
		\gamma \tilde{p}_1(\lambda) = \tilde{a}_0 \bigg(1 + \frac{\beta}{\gamma} u_0\bigg).
	\]
	Thus,
	\[
		\tilde{a}_0 = \frac{\abs{\gamma}}{\sqrt{1 + \frac{\beta}{\gamma} u_0}},
	\]
	which completes the proof.
\end{proof}
Given $\alpha \in \sS^1$ and $\lambda \in \RR \setminus [\lambda_-, \lambda_+]$, with a help of Proposition
\ref{fakt:perturbacjaUn}, we may modify the sequences $a$ and $b$ so that 
\[
	S_n(\alpha, \lambda) = \tilde{a}_{n+N+1}^2 
	\big(\tilde{p}_{n+2}(\lambda) \tilde{p}_{n+N+1}(\lambda) - \tilde{p}_{n+1}(\lambda) \tilde{p}_{n+N+2}(\lambda)\big).
\]
Therefore, in order to calculate the limit of $(S_n(\alpha, \lambda) : n \in \NN)$ it is enough to work with
$N$-shifted Tur\'an determinants. This approach is the subject of the forthcoming article \cite{GS2}.
 
\section{Applications} \label{sec:4}

\subsection{Asymptotics of orthonormal polynomials}
   In \cite[Conjecture 6.7]{Ignjatovic2009} the following conjecture has been stated.
   \begin{hip}[Ignjatović \cite{Ignjatovic2009}]
      \label{part3:con:1}
      Assume that $b_n \equiv 0$ and
      \[
         \lim_{n \rightarrow \infty} \frac{a_n}{n^\kappa} = c > 0, \quad \kappa \in (0, 1).
      \]
      Then 
      \[
         \lim_{n \rightarrow \infty} \frac{\sum_{k=0}^n p^2_k(x)}{\sum_{k=0}^n 1/a_k}
      \]
      exists and is positive for $x \in \sigma(A)$, where $(p_n(x) : n \in \mathbb{N})$ is defined in \eqref{def:wielomianow}.
   \end{hip}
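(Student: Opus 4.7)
The plan is to apply Theorem~\ref{thm:1} with $N = 1$ to the polynomial eigenvector $(p_n(\lambda))$, upgrade its two-sided bound to a genuine asymptotic via Corollary~\ref{cor:2}, and then conclude through a Stolz--Ces\`aro averaging argument.

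For the setup, with $N = 1$ the assumptions of Theorem~\ref{thm:1} reduce to $\calV_1(1/a_n) + \calV_1(b_n/a_n) + \calV_1(a_{n+1}/a_n) < \infty$ together with the limits $q_0 = \lim b_n/a_n = 0$ (trivial since $b_n \equiv 0$) and $r_0 = \lim a_{n-1}/a_n = 1$ (immediate from $a_n \sim c n^\kappa$). A direct computation gives $\calF = \left(\begin{smallmatrix} 0 & 1 \\ -1 & 0 \end{smallmatrix}\right)$, so $E\calF + \calF^t E^t = 2\Id$ and the determinant condition $\det(E\calF + \calF^t E^t) = 4 > 0$ is satisfied. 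Granting the total-variation hypotheses (automatic under mild regularity of $(a_n)$ compatible with $a_n \sim c n^\kappa$), Theorem~\ref{thm:1} applied to the polynomial eigenvector, whose initial pair $(1, \lambda/a_0)$ is bounded uniformly on compact intervals, yields
\[
\frac{c_1}{a_n} \leq p_{n-1}^2(\lambda) + p_n^2(\lambda) \leq \frac{c_2}{a_n}.
\]
Since $\kappa < 1$ gives $\sum 1/a_n = \infty$, the matrix $A$ is self-adjoint and strong non-subordinacy forces $\sigma(A) = \RR$.

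Next, I would upgrade this to a genuine asymptotic. By Corollary~\ref{cor:2} the scalar
\[
S_n = a_{n-1} p_{n-1}^2 + a_n p_n^2 - \lambda p_{n-1} p_n
\]
converges to a positive limit $S_\infty(\lambda)$. Rewriting $S_n = a_n(p_{n-1}^2 + p_n^2) + (a_{n-1} - a_n) p_{n-1}^2 - \lambda p_{n-1} p_n$ and using $a_n - a_{n-1} = O(n^{\kappa - 1})$ together with $p_n^2 = O(n^{-\kappa})$, both correction terms are $o(1)$; hence $a_n (p_{n-1}^2(\lambda) + p_n^2(\lambda)) \to S_\infty(\lambda) > 0$. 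Since $\sum_{k \leq n} 1/a_k$ is strictly increasing to $+\infty$ and the increment ratio $(p_{n-1}^2 + p_n^2)/(1/a_n)$ tends to $S_\infty(\lambda)$, the Stolz--Ces\`aro theorem gives
\[
\frac{\sum_{k = 1}^n (p_{k-1}^2(\lambda) + p_k^2(\lambda))}{\sum_{k=0}^n 1/a_k} \longrightarrow S_\infty(\lambda).
\]
The telescoping identity $\sum_{k=1}^n (p_{k-1}^2 + p_k^2) = 2\sum_{k=0}^n p_k^2 - p_0^2 - p_n^2$, combined with $p_0 = 1$ and $p_n \to 0$, then shows that the conjectured limit exists and equals $S_\infty(\lambda)/2 > 0$.

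The main obstacle is bridging the gap between the bare asymptotic $a_n/n^\kappa \to c$ stated in the conjecture and the total-variation hypotheses required by Theorem~\ref{thm:1}. Without additional regularity the latter can fail (one can perturb $a_n$ by an irregular sequence of smaller order while preserving the asymptotic), so strictly speaking the strategy above establishes Ignjatović's conjecture in the presence of appropriate variation control. This control is nevertheless automatic for all the standard examples such as $a_n = c n^\kappa + O(n^{\kappa - 1})$, in which case the argument also yields the explicit value $S_\infty(\lambda)/2$ of the limit.
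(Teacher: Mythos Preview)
This statement is presented in the paper as a \emph{conjecture} of Ignjatović, not as a theorem, and the paper does not prove it in the stated generality. What the paper does establish is Corollary~\ref{cor:asymptReg}, which yields the conclusion under additional total-variation hypotheses on $(a_n)$; the text says explicitly that this ``provides sufficient conditions under which the conjecture holds.'' Your proposal reaches exactly the same partial result, and you correctly isolate the obstruction: the bare asymptotic $a_n/n^\kappa \to c$ does not by itself force the variation conditions required by Theorem~\ref{thm:1}, so the full conjecture remains out of reach by this method.

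As for the argument itself, your route and the paper's are essentially identical. The paper obtains Corollary~\ref{cor:asymptReg} from Corollaries~\ref{cor:1} and~\ref{cor:2} together with Remark~\ref{rem:3} (which verifies $\det(\sym(E\calF))>0$ for odd $N$ with $q_j=0$). You specialise to $N=1$, check the determinant condition by hand, invoke Corollary~\ref{cor:2} for the convergence of $S_n$, and then do by hand what Corollary~\ref{cor:1} packages: passing from $S_n\to S_\infty$ to $a_n(p_{n-1}^2+p_n^2)\to S_\infty$. Your two error terms $(a_{n-1}-a_n)p_{n-1}^2$ and $\lambda p_{n-1}p_n$ are precisely what vanish because $\sym(E B_n^\lambda)\to\Id$; incidentally, for the first you only need $a_{n-1}/a_n\to 1$ (which already follows from $a_n\sim c n^\kappa$ and the bound $p_{n-1}^2=O(1/a_n)$), not the stronger $a_n-a_{n-1}=O(n^{\kappa-1})$. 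The final Stolz--Ces\`aro step you spell out is left implicit in the paper. Your explicit identification of $S_n$ with the Tur\'an-type expression $a_{n-1}p_{n-1}^2+a_n p_n^2-\lambda p_{n-1}p_n$ is correct and helpful.
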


   Recently, in \cite{Ignjatovic2014} (see also \cite{IgnjatovicLubinsky2016}) additional conditions were imposed so that the conjecture holds. 
   
   Corollary~\ref{cor:1} and \ref{cor:2} combined with Remark~\ref{rem:3} gives
   \begin{corollary} \label{cor:asymptReg}
      Let $N$ be an odd number. Assume that $b_n \equiv 0$ and
      \[
          \calV_1\bigg(\frac{a_{n+N}}{a_n} : n\in \NN\bigg) + \calV_N\bigg(\frac{1}{a_n} : n\in \NN\bigg) < \infty.
      \]
      If
      \begin{enumerate}[(a)]
         \item
         $\begin{aligned}
            \lim_{n \rightarrow \infty} \frac{a_{n+1}}{a_n} = 1, \quad \lim_{n \rightarrow \infty} a_n = \infty,
         \end{aligned}$
         
         \item
         $\begin{aligned}
            \sum_{n=0}^\infty \frac{1}{a_n} = \infty,
         \end{aligned}$            
      \end{enumerate}
      then
      \[
         \lim_{n \rightarrow \infty} \frac{\sum_{k=0}^n p^2_k(x)}{\sum_{k=0}^n 1/a_k}
      \]
      exists and is positive for $x \in \RR$.
   \end{corollary}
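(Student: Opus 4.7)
The plan is to combine Theorem~\ref{tw:1}, Proposition~\ref{fakt:2} (via Remark~\ref{rem:3}), and Corollaries~\ref{cor:1} and~\ref{cor:2} to obtain a precise asymptotic for $p_{n-1}^2(x) + p_n^2(x)$, and then conclude by a Stolz--Cesàro argument. I would first verify the hypotheses: since $b_n \equiv 0$, one has $\calV_N(b_n/a_n : n \in \NN) = 0$, which together with the assumed variation bounds gives condition~(a) of Theorem~\ref{tw:1}. The assumption $\lim a_{n+1}/a_n = 1$ supplies condition~(b) for all sufficiently large $n$, and further implies $q_j = 0$ and $r_j = 1$ for each $j \in \{0, \ldots, N-1\}$. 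Since $N$ is odd, Remark~\ref{rem:3} yields $E\calF = (-1)^{(N-1)/2}\Id$ and hence $\det(\sym(E\calF)) = 1 > 0$, so by Proposition~\ref{fakt:2} the family $\{Q^\lambda : \lambda \in \RR\}$ is almost uniformly non-degenerated. Theorem~\ref{tw:1} then applies, and Corollary~\ref{cor:2} gives that $(S_n)$ converges almost uniformly on $\sS^1 \times \RR$ to a nowhere-zero continuous function~$g$.

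Next, I would apply Corollary~\ref{cor:1}. Let $\calB$ denote the common limit of $B_n^\lambda$; one has $\calB^2 = -\Id$, so for odd $N$ one gets $\calB^N = (-1)^{(N-1)/2}\calB$, and a direct calculation yields $E\calB = \Id$. Therefore $Q_n^\lambda(v) \to (-1)^{(N-1)/2}\norm{v}^2$ uniformly in $\lambda$ on compact sets, so Corollary~\ref{cor:1} applies with $f \equiv (-1)^{(N-1)/2}$ and $\abs{f} \equiv 1$. Since the subsequential limits $g_j$ all coincide with $\abs{g}$ thanks to Corollary~\ref{cor:2}, one obtains
\[
    \lim_{n \to \infty} a_{n + N - 1}\big(u_{n-1}^2 + u_n^2\big) = \abs{g(\alpha, \lambda)}
\]
for every generalized eigenvector $u$ with $(u_0, u_1) = \alpha \in \sS^1$. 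Using $a_{n+N-1}/a_n \to 1$ and scaling the formal eigenvector $(p_n(x))$ by $r = \sqrt{1 + x^2/a_0^2}$ (so that $p_n(x) = r\, u_n$ for $u$ with initial vector $(1, x/a_0)/r \in \sS^1$), this yields
\[
    \lim_{n \to \infty} a_n\big(p_{n-1}^2(x) + p_n^2(x)\big) = L(x) := r^2 \abs{g(\alpha, x)} > 0.
\]

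The conclusion follows from the telescoping identity
\[
    \sum_{k=1}^n \big(p_{k-1}^2(x) + p_k^2(x)\big) = 2\sum_{k=0}^n p_k^2(x) - 1 - p_n^2(x)
\]
combined with the Stolz--Cesàro theorem: since $\sum 1/a_k = \infty$ and $a_k\big(p_{k-1}^2(x) + p_k^2(x)\big) \to L(x)$, the left-hand side is asymptotic to $L(x)\sum_{k=0}^n 1/a_k$. Since $p_n^2(x) \to 0$ (because $p_n^2 \leq p_{n-1}^2 + p_n^2 = O(1/a_n)$), the boundary terms remain bounded and hence are negligible compared with $\sum 1/a_k$, yielding the desired positive limit $L(x)/2$. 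The main obstacle is the bookkeeping needed to invoke Corollary~\ref{cor:1}, in particular verifying that $Q_n^\lambda$ converges to a nonzero scalar multiple of the identity and upgrading the subsequential limits $g_j$ to a full-sequence statement via Corollary~\ref{cor:2}; once that is in hand, the passage to the Cesàro-mean statement is routine.
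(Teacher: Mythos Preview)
Your proposal is correct and follows essentially the same route as the paper: the paper simply states that the result follows from Corollary~\ref{cor:1} and Corollary~\ref{cor:2} combined with Remark~\ref{rem:3}, and you have accurately spelled out how these pieces assemble (verifying $Q_n^\lambda \to (-1)^{(N-1)/2}\Id$, identifying all the $g_j$ with $|g|$ via the full-sequence convergence of $(S_n)$, and finishing with Stolz--Ces\`aro). The only additions over the paper's one-line sketch are the explicit computation of $E\calB^N$ and the routine Ces\`aro passage, both of which you handle correctly.
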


   Corollary~\ref{cor:asymptReg} provides sufficient conditions under which the conjecture holds. Our conditions are actually weaker than in \cite{Ignjatovic2014}. For detailed discussion and comparisons we refer to \cite{GS2}.

\subsection{Classical families of orthogonal polynomials}
   In this section we present three well-known families of orthogonal polynomials which satisfy the assumptions of our theorems. In what follows, we define the density of a measure $\mu$. The corresponding sequence of orthonormal polynomials $(p_n(x) : n \in \NN)$ satisfies the recurrence relation of the form \eqref{def:wielomianow}.
   \begin{enumerate}
      \item[(a)] Generalized Hermite polynomials. Let
         \[
            \mu'(x) = c_t |x|^{t} \ue^{-x^2}
         \]
         for $t > -1$ and a normalizing constant $c_t$. Then 
         \[
            a_n = \frac{1}{\sqrt{2}} \sqrt{n+1+d_n}, \quad d_{2k} = t, \quad d_{2k+1} = 0, \quad b_n \equiv 0
         \]
         (see \cite[page 157]{Chihara1978}). The assumptions of Theorem~\ref{thm:2} are satisfied. If $t \neq 0$, the assumptions of Theorem~\ref{thm:1} are not satisfied.
      
      \item[(b)] Meixner-Pollaczek polynomials. Let
         \[
            \mu'(x) = c_{\lambda, \varphi} \ue^{(2 \varphi - \pi) x} |\Gamma(\lambda + i x)|^2
         \]
         for $\lambda > 0,\ \varphi \in (0, \pi)$ and a normalizing constant $c_{\lambda, \varphi}$. Then
         \[
            a_n = \frac{\sqrt{(n+1) (n + 2 \lambda)}}{2 \sin \varphi}, \quad b_n = \frac{n + \lambda}{\tan \varphi}
         \]
         (see \cite[Chapter 9.7]{koekoek2010hypergeometric}). The assumptions of Theorem~\ref{thm:1} are satisfied.
         
      \item[(c)] Freud polynomials. Let
         \[
            \mu'(x) = c_\beta \ue^{-|x|^\beta}
         \]
         for $\beta > 0$ and a normalizing constant $c_\beta$. Then
         \[
            b_n \equiv 0, \quad \frac{a_n}{(n+1)^{1/\beta}} = c_\beta' + r_n, \quad (r_n : n \in \NN) \in \ell^1
         \]
         for explicit constant $c_\beta'$ (see \cite[Theorem 1.3]{DeiftKriecherbauerMcLaughlinEtAl2001}). The assumptions of Theorem~\ref{thm:1} are satisfied. By Theorem~\ref{tw:3}, the Jacobi matrix associated with sequences $a$ and $b$ is self-adjoint only for $\beta \geq 1$.
   \end{enumerate}
   
   For detailed explanations and numerical tests we refer to \cite{GS2}.

\subsection{Spectral analysis of Jacobi matrices} \label{sec:4.3}
In this section, by applying Theorem \ref{tw:1} and Theorem \ref{tw:2}, we show generalizations of results known in the 
literature. It is worth noting that conclusions from theorems presented in this article are stronger. Namely, we provide 
asymptotics of generalized eigenvectors in different form, and consequently, an additional information about the density.

\subsubsection{Multiple weights}
In \cite{MM1} the author considered Jacobi matrices with $b_n \equiv 0$ and
\[
   a_{2k} = a_{2k+1} = (k+1)^\alpha
\]
for $\alpha \in (0,1]$. The following examples are generalizations of his result.
            
\begin{example}[Regular case]
	Fix a positive integer $N$. Let $(\tilde{a}_n : n \in \NN)$ be a sequence of positive numbers such that
	\begin{enumerate}[(a)]
		\item $\begin{aligned}
			\lim_{n \to \infty} \tilde{a}_n = \infty;
		\end{aligned}$
		\item $\begin{aligned}
			\sum_{n = 0}^\infty \frac{1}{\tilde{a}_n} = \infty;
		\end{aligned}$
		\item $\begin{aligned}
			\calV_1\bigg(\frac{\tilde{a}_{n+1}}{\tilde{a}_n} : n \in \NN\bigg) +
			\calV_1\bigg(\frac{1}{\tilde{a}_n} : n \in \NN\bigg) < \infty.
		\end{aligned}$
	\end{enumerate}
	Let
	\[
		q \in (-2, 2) \setminus \bigg\{ 2 \cos \frac{\pi}{N}, \ldots, 2 \cos \frac{(N-1) \pi}{N} \bigg\}.
	\]
	We set
	\[
		a_{kN} = a_{kN + 1} = \cdots = a_{kN+N-1} = \tilde{a}_k
		\quad\text{and}\quad b_n = q a_n.
	\]
	Then $\sigma(A) = \mathbb{R}$ and the spectrum of the matrix~$A$ is absolutely continuous.
\end{example}

\begin{example}[Critical case]
	\label{ex:1}
	Fix a positive integer $N$. Let $(\tilde{a}_n : n \in \NN)$ be a sequence of positive numbers such that
	\begin{enumerate}[(a)]
		\item $\begin{aligned}
			\lim_{n \to \infty} \tilde{a}_n = \infty;
		\end{aligned}$
		\item $\begin{aligned}
			\calV_1(\tilde{a}_n - \tilde{a}_{n-1} : n \in \NN) + 
			\calV_1\bigg(\frac{1}{\tilde{a}_n} : n \in \NN\bigg) < \infty;
		\end{aligned}$
		\item $\begin{aligned}
			\lim_{n \to \infty} (\tilde{a}_n - \tilde{a}_{n-1}) = s.
		\end{aligned}$
	\end{enumerate}
	Let
	\[
	q = 2 \cos \frac{k_0 \pi}{N}
	\]
	for some $k_0 \in \{1, \ldots, N-1\}$. We set
	\[
		a_{kN} = a_{kN + 1} = \cdots = a_{kN + N-1} = \tilde{a}_k
		\quad\text{and}\quad b_n = q a_n.
	\]
	Then $\sigma(A) \supseteq \RR \setminus (\lambda_-, \lambda_+)$, where 
	\[
		\lambda_- = s \frac{q - 2}{2 N}, \qquad \lambda_+ = s \frac{q + 2}{2N}
	\]
	and the spectrum of the matrix~$A$ is absolutely continuous on $\RR \backslash [\lambda_-, \lambda_+]$.
\end{example}
Whether there is an actual gap around zero depends on a speed of divergence of $(a_n : n \in \NN)$. In particular,
in \cite{DP1}, the authors considered the case when $N$ is an even integer and $q = 0$. They proved that if
\[
	\lim_{n \to \infty} \frac{\tilde{a}_n}{n} = 0,
\]
then zero is not an eigenvalue of $A$ but is an accumulation point of $\sigma(A)$, consequently, there is no gap around zero in the essential spectrum of $A$.
On the other hand, in \cite{HL1} it is proved that if
\[
	\lim_{n \to \infty} \frac{\tilde{a}_n}{n} = \infty,
\]
then the matrix $A$ is self-adjoint and $\sigma(A)$ has no accumulation points.

\subsubsection{Additive periodic perturbations}
In \cite{JM1} and \cite{JNS} the authors studied Jacobi matrices with $b_n \equiv 0$ and
\[
	a_n = (n+1)^\alpha + d_n
\]
for $\alpha \in (0, 1]$, where $(d_n : n \in \NN)$ is a $N$-periodic sequence, i.e. there is the minimal number $N \geq 1$
with the property that $d_{n + N} = d_n$ for all $n \in \NN$. The following examples are generalizations of their results.
\begin{example}[Regular case]
	Fix a positive integer $N$. Let $(\tilde{a}_n : n \in \NN)$ be a sequence of positive numbers such that
	\begin{enumerate}[(a)]
		\item $\begin{aligned}
			\lim_{n \to \infty} \tilde{a}_n = \infty;
		\end{aligned}$
		\item $\begin{aligned}
			\sum_{n = 0}^\infty \frac{1}{\tilde{a}_n} = \infty;
		\end{aligned}$
		\item $\begin{aligned}
			\calV_1\bigg(\frac{\tilde{a}_{n+1}}{\tilde{a}_n} : n \in \NN\bigg)
			+\calV_1\bigg(\frac{1}{\tilde{a}_n} : n \in \NN\bigg) < \infty.
		\end{aligned}$
	\end{enumerate}
	Let
	\[
		q \in (-2, 2) \setminus \bigg\{2 \cos \frac{\pi}{N}, \ldots, 2 \cos \frac{(N-1)\pi}{N} \bigg\},
	\]
	and
	\[
		a_n = \tilde{a}_n + d_n, \quad\text{and}\quad b_n = q a_n.
	\]
	Then $\sigma(A) = \mathbb{R}$ and the spectrum of the matrix~$A$ is absolutely continuous.
\end{example}
            
\begin{example}[$N$ even] \label{example:periodicEven}
	Let $N$ be an even integer. Let $(\tilde{a}_n : n \in \NN)$ be a sequence of positive numbers such that
	\begin{enumerate}[(a)]
		\item $\begin{aligned}
			\lim_{n \to \infty} \tilde{a}_n = \infty;
		\end{aligned}$
		\item $\begin{aligned}
			\calV_1(\tilde{a}_n - \tilde{a}_{n-1} : n \in \NN) + \calV_1\bigg(\frac{1}{\tilde{a}_n} : n \in \NN\bigg) 
			< \infty.
		\end{aligned}$
	\end{enumerate}
	Let
	\[
		a_n = \tilde{a}_n + d_n \quad\text{and}\quad b_n = 0.
	\]
	We set
	\[
		D = \sum_{n = 1}^N (-1)^n d_n.
	\]
	Then $\sigma(A) \supseteq \mathbb{R} \setminus \big(-2 \abs{D}/N, 2 \abs{D}/N\big)$
	and the spectrum of the matrix~$A$ is absolutely continuous on
	$\mathbb{R} \setminus \big[-2 \abs{D}/N, 2\abs{D}/N \big]$.
\end{example}
The result of Example \ref{example:periodicEven} for $N=2$ was proved in \cite[Theorem 5.2]{DJMP}.

Again, whether there is an actual gap around zero requires different techniques.
For $N=2$ results from \cite{DJMP} imply that the set $(-2 \abs{D}/N, 2 \abs{D}/N)$ is a gap in the essential spectrum of $A$, and consequently, the essential spectrum of $A$ equals 
$\mathbb{R} \setminus \big(-2 \abs{D}/N, 2\abs{D}/N \big)$.

\subsubsection{Periodic modulations}
The next example is a periodic modulation considered in \cite{JN1}.

\begin{example}
	Let $N$ be a positive integer. Let $(\tilde{a}_n : n \in \NN)$ and $(\tilde{b}_n : n \in \NN)$ be sequences
	such that $\tilde{a}_n > 0$ and
	\begin{enumerate}[(a)]
		\item $\begin{aligned}
			\lim_{n \to \infty} \tilde{a}_n = \infty;
		\end{aligned}$
		\item $\begin{aligned}
			\sum_{n = 0}^\infty \frac{1}{\tilde{a}_n} = \infty;
		\end{aligned}$
		\item $\begin{aligned}
			\lim_{n \to \infty} \frac{\tilde{b}_n}{\tilde{a}_n} = \delta;
		\end{aligned}$
		\item $\begin{aligned}
			\calV_1\bigg(\frac{\tilde{a}_{n+1}}{\tilde{a}_n} : n \in \NN\bigg) +
			\calV_1\bigg(\frac{1}{\tilde{a}_n} : n \in \NN\bigg) +
			\calV_1\bigg(\frac{\tilde{b}_n}{\tilde{a}_n} : n \in \NN\bigg) < \infty.
		\end{aligned}$
	\end{enumerate}
	Let $(\alpha_n : n \in \NN)$ and $(\beta_n : n \in \NN)$ be $N$-periodic sequences such that the matrix
	\[
		\calF = \prod_{j = 1}^N
		\begin{pmatrix}
			0 & 1 \\
			- \frac{\alpha_{j-1}}{\alpha_j} & -\delta \frac{\beta_j}{\alpha_j}
		\end{pmatrix}
	\]
	satisfies
	\begin{equation}
		\label{eq:54}
		\abs{\tr (\calF)} < 2.
	\end{equation}
	For instance, the condition \eqref{eq:54} holds for $N$ odd and $\delta = 0$. Finally, we set 
	\[
		a_n = \alpha_n \tilde{a}_n\quad\text{and}\quad b_n = \beta_n \tilde{b}_n.
	\]
	Then $\sigma(A) = \mathbb{R}$ and the spectrum of the matrix~$A$ is absolutely continuous.
	According to \cite{JN1}, if $\big| \tr(\calF) \big| > 2$, then the spectrum of $A$ has no accumulation points.
\end{example}

\section*{Acknowledgements}
   The first author would like to thank Ryszard Szwarc for simplification of the proof of Proposition~\ref{fakt:perturbacjaUn}.

\bibliographystyle{abbrv} 
\bibliography{perturbacje}

\end{document}